\newtheorem{theorem}{Theorem}          
\newtheorem{proposition}{Proposition}    
\newtheorem{lemma}{Lemma}               
\newtheorem{corollary}{Corollary}
\theoremstyle{definition}
\newtheorem{definition}{Definition}
\newtheorem{example}{Example}
\DeclareMathOperator\lspan{span}
\renewcommand{\P}{\mathcal{P}}
\newcommand{\K}{\mathbb{K}}
\newcommand{\R}{\mathcal{R}}
\renewcommand{\L}{\mathcal{L}}
\newcommand{\weight}{b}
\author{Anna Muranova}
 \address{Anna Muranova: Faculty of Mathematics and Computer Science,
University of Warmia and Mazury in Olsztyn,
ul. Sloneczna 54, 10-710 Olsztyn, Poland} 
 \email{anna.muranova@matman.uwm.edu.pl}
\thanks{
\textit{Keywords}: weighted graphs, differential operators, Laplace operator, ordered field, transition operator, Cheeger constant.\\
\textit{Mathematics Subject Classification 2010:}{05C50,
47A75, 47A10, 05C22, 12J15}}
    \title[Laplacian on graphs]{Discrete Laplace and transition operators over non-Archimedean ordered fields}
\begin{document}

\maketitle

\medskip
    \begin{abstract}

We investigate properties of spectrum of normalized Laplacian $\L$ for finite graphs over non-Archimedean ordered fields. We prove a Cheeger's inequality for first non-zero eigenvalue. Then we describe properties of the operator $\P=I-\L$, which is a generalization of transition operator. We show that Cheeger estimate $\alpha_1\preceq \sqrt{1-h^2}$ for the second largest eigenvalue of $\P$ is crucial for investigation of the convergence of analogue of random walk to equilibrium over a non-Archimedean ordered fields. We consider examples over the Levi-Civita field.
    \end{abstract}

\section{Introduction}

The eigenvalues of normalized Laplacian $\mathcal L$ on weighted graphs is a classical widely investigated topic.
For finite graphs the results are presented in the classical books and papers on graph theory, like e.g. \cite{BauerJost}, \cite{Chung}, \cite{Grigoryan}. The probability (transition) operator $\P$ is the difference of identity operator and Laplacian. It uniquely describes a random walk on graph. The most important results in classical case are that all eigenvalues of Laplacian lie in $[0,2]$, that $0$ is always an eigenvalue and that the eigenvalues are symmetric with respect to $1$ if and only if the graph is bipartite. The branch of results for probability operator follows from the above mentioned facts. Further one can consider the Hilbert space of functions from vertices of graph to $\mathbb R$ and investigate properties of $\P$ and $\L$ as operators on this space. It is known, that random walk convergence to an equilibrium for any given function on finite graph (see e.g. \cite{Chung}, \cite{Grigoryan}).

We investigate the Laplace and transition operators on graphs, whose weighted belongs to a non-Archimedean real-closed ordered field $\K$, following the previous works by author (\cite{Muranova1}, \cite{Muranova2}). We show, that the results, arising from linear algebra hold also in this settings. More precisely, we present the prove, that eigenvalues of Laplacian belong to the same field, are between $0$ and $2$. Further, they are symmetric with respect to $1$ if and only if the graph is bipartite.
Moreover, some inequalities in terms of number of vertices also hold for them (see e.g. Proposition \ref{Prop1}). We present the direct  proofs of all these facts.

Since we do not use non-standard probability theory (e.g. \cite{Nelson}), to avoid a confusion we call the operator $\P$  \emph{transition operator} in this paper, keeping in mind, that its matrix has values between $0$ and $1$ in $\K$. The details about operators are described in Section \ref{section::LP}.

In non-Archimedean fields the fact that eigenvalues of $\P$ are between $-1$ and $1$ is not enough, to show the convergence of $\P^m f, m\in \Bbb N$, $f$ is a function on vertices, taking values in $\Bbb K$ to an equilibrium (function on vertices, depended on $f$). Moreover, the convergence it not always the case. One of the possible ways to investigate the convergence is to use Cheeger inequality (see Theorem \ref{Theorem::Cheeger}):
$$
\lambda_1\succeq 1-\sqrt{1-h^2}
$$
for the smallest non-zero eigenvalue of Laplacian, which leads to $\alpha_1\preceq \sqrt{1-h^2}$ for the second smallest eigenvalue of $\P$. Further we show, that in case of non-Archimidean ordered field there is a crucial difference between two classical formulations of Cheeger inequality: 
$$\lambda_1\succeq {\frac{h^2}{2}}
\;\;\;\;\;\;\mbox{ and }\;\;\;\;\;\;
\lambda_1\succeq 1-\sqrt{1-h^2}
$$
since only the latest one can be used to prove the convergence. 

In Section \ref{section::rw} we investigate the convergence of $\P^m f$ or $\P^{2m} f$ for some $f$ to equilibrium, using inequalities, which in the classical (real) case are used to calculate the rate of convergence. In Theorem~\ref{Prop3} we show that, on the contrary to real case, for any non-bipartite non-complete graph over non-Archimedean field there is always a subspace of functions, for which  $\P^m f$ does not converge. Complete graphs show different behavior due to Lemma \ref{lemma::non-complete} and we do not describe them in this note.

The largest non-Archimedean ordered field is the field of surreal numbers (\cite{Conway}, \cite[Theorem 24.29]{BB}), and it is known (e.g. \cite{RSS}) that the convergent sequences in classical sense are  exclusively constant sequences there. Therefore, it makes sense to investigate convergence only in some particular non-Archimedean ordered fields. Following out previous work (\cite{Muranova2}), which shows that the Levi-Civita field appears naturally in theory of electrical networks, in the last section we investigate the convergence of $\P^m$ over the Levi-Civita field. Theorem \ref{corhLC} gives the sufficient condition for convergence for any function on bipartite graph in terms of Cheeger constant. For non-bipartite non-complete graphs the Theorem \ref{corhLC1} describes sufficient conditions for existence of function $f$ on vertices, such that $\P^m f$ converges to an equilibrium.

\section{Preliminaries}
\label{Section::preliminaries}
Let $\K$ be an arbitrary real-closed non-Archimedean ordered field (in fact, an order can be introduced in any real-closed field \cite[Theorem 2.2]{Lang}).  If the ordered field is not real closed, we should consider one of its real closures, which are all isomorphic due to Artin-Schreier Theorem (see e.g. \cite{DalesHughWoodin}) with isomorphism, preserving the order. A field is called \emph{ordered} if the property of positivness ($\succ 0$) is defined for its elements, and if it satisfies the folowing postulates:
\begin{enumerate}
\item
For every element $a\in \K$, just one of the relations 
$$
a\succ 0, a=0, -a\succ 0
$$
is valid.
\item
If $a\succ 0$ and $b\succ 0$, then $a+b\succ 0$ and $ab\succ 0$. We write $a\succ b$ (``$a$ greater than $b$'') if $a-b\succ 0$. We write $a\prec b$ if $b\succ a$ (``$a$ smaller than $b$''). This is a total order in $\K$. The signs $\preceq$ and $\succeq$ correspond to ``great or equal'' and ``smaller or equal'' 
(see \cite{Waerden}).
\end{enumerate}
The set $\K^+=\{a \in \K\mid a\succ 0\}$ is called a \emph{set of positive elements}.

An ordered field is  \emph{non-Archimedean}, if there exist an \emph{infinitesimal}~$\tau\in \K^+$, i.e, $\tau$ such that $\tau\preceq 1/m$ for any $m\in \Bbb N$, where $m=\underbrace{1+\cdots+1}_{m}\in \K^+$.

An ordered field $\K$ is \emph{real-closed} if it has no proper algebraic extension to an ordered field, or, equivalently, if the complexification of $\K$ is algebraically closed, or, again equivalently, if every positive element in $\K$ has a square root and every polynomial over $\K$ of odd degree has a root in $\K$ \cite[p. 38]{DalesHughWoodin}. 

Let $\K$ be a real-closed field. Further we need a complexification of $\K$, i.e a field extension $\K(\sqrt{-1})$, which is algebraically closed. Let us agree, that we pick up one square root of  $-1$ and denote it by $\mathbf i$. Then we can consider the automorphism of this field, which maps elements of $\K$ into themselves, and maps $\mathbf i$ to $-\mathbf i$. We call this automorphism a \emph{complex conjugation} and we denote it further in text by overlining the element (in the same way as case of usual complex conjugation). We denote $\K(\mathbf i)$ by $\K_C$.
\begin{definition}\cite{Muranova1}
Let $\K$ be an ordered field. A \emph{graph (over an oredred field $\K$)} is a couple $(V, b)$, where $V$ is a set of vertices (i.e. arbitrary set)
and $\weight:V\times V\rightarrow \K$ satisfies the following properties:
\begin{enumerate}
\item
$\weight (x,y)\succeq 0$ for any $x,y\in V$,
\item
$\weight (x,y)=\weight (y,x)$ for any $x,y\in V$,
\item
$\weight (x,x)= 0$ for any $x\in V$.
\end{enumerate}
If $\weight (x,y)\ne 0$, we say that there is an \emph{edge} between $x,y$ and write $x\sim y$. 
\end{definition}
The third condition means, that we consider graphs without loops.
When the field $\K$ is relevant, we will say \emph{graph over the field $\K$}.
Note that if instead of $\K$ we use $\mathbb R$, then we obtain a classical definition of a \emph{weighted graph}.

The \emph{path} in graph is any sequence of vertices $x_1,x_2,\dots,x_n$ such that 
$$
x_1\sim x_2\sim\dots \sim x_n.
$$
A graph is called \emph{connected} if there exist a path between any two of its vertices. A graph is called \emph{finite} if its set of vertices is finite ($\#V<\infty$).
In this note we consider exclusively finite connected graphs.

A graph is called \emph{complete} if $x\sim y$ for any $x,y\in V$. Otherwise it is called \emph{non-complete}.

A graph is called \emph{bipartite}, if there exist a partition of its vertices $V=~V_1\cup ~V_2$ such that if $x\sim y$ for $x,y\in V$ then either $x\in V_1, y\in V_2$ or $x\in V_2, y\in V_1$. Otherwise, the graph is called \emph{non-bipartite}.

Let us consider the \emph{normalized} weights 
\begin{equation}
p(x,y)=\dfrac{\weight (x,y)}{\weight (x)},
\end{equation}
where $\weight (x)=\sum_y \weight (x,y)$. Note, that $p(x,y)\ne p(y,x)$ in general.

It is known that weighted graph can be considered as Hilbert space (e. g. \cite{Grigoryan}, \cite{KellerBook}). We would like to introduce an analogue of scalar product on graphs over ordered fields.

Let us consider a set of functions on vertices of a graph $(V,b)$:
\begin{equation*}
\mathfrak F=\{f\mid f:V\rightarrow \K\}
\end{equation*}

As an analogue of \emph{scalar} product for any $f,g\in \mathfrak F$ we consider
\begin{equation*}
\langle f,g\rangle=\sum_{x\in V} f(x)g(x)\weight (x).
\end{equation*}

We list below the properties  of this scalar product, which are crucial for us. These properties coincide with the properties of usual scalar product for order $>$ on $\mathbb R$.
\begin{enumerate}
\item 
for any $f,g\in \mathfrak F$ holds
\begin{equation*}
\langle f,g\rangle=\langle g,f\rangle;
\end{equation*}
\item 
for any $a,b\in \K$ and for any $f,g\in \mathfrak F$ holds
\begin{equation*}
\langle af+bf,g\rangle=a\langle f,g\rangle+b\langle f,g\rangle;
\end{equation*}
\item
for any $f\in \mathfrak F$ 
\begin{equation*}
\langle f,f\rangle\succeq 0
\end{equation*}
and  $\langle f,f\rangle = 0$ if and only if $f\equiv 0$ on $V$.
\end{enumerate}
Moreover, the analogue of complex-valued scalar product can be considered for the functions, taking values in $\K_C$. We denote the set of functions on vertices with values from $\K_C$ by 
\begin{equation*}
\mathfrak F_C=\{f\mid f:V\rightarrow \K_C\}.
\end{equation*}
Then the `complex' scalar product for $f,g\in \mathfrak F_C$ is defined by
\begin{equation*}
\langle f,g\rangle_C=\sum_{x\in V} f(x)\overline {g(x)}\weight (x).
\end{equation*}
It posses properties (2) -- (3) and is anti-symmetric, i.e. 
\begin{enumerate}[label=(1*)]
\item for any $f,g\in \mathfrak F_C$ $$\langle f,g\rangle_C=\overline{\langle g,f\rangle}_C.$$
\end{enumerate}

Obviously, $\langle f,g\rangle_C=\langle f,g\rangle$ for any $f,g\in \mathfrak F$.

These things are usually described in frameworks  of symmetric and Hermitian quadratic forms (see e.g. \cite{Lang}), but we would like to bring here together several things and introduce an  analogue of a pre-Hilbert space on finite graphs over ordered fields.

We define 
$$\|f\|=\sqrt {\langle f,f\rangle}\in \K^+
$$ 
for any $f\in  \mathfrak F_c$.

\section{Laplace and transition operators, their eigenvalues}\label{section::LP}

Our main point of investigation will be a normalized Laplace operator (Laplacian) and transition operator on $\mathfrak F$.  
\emph{Laplacian} on a graph $(V,b)$ over $\K$ is defined as $\L:\mathfrak F\to \mathfrak F$:
\begin{equation}
\mathcal L f(x)=\sum_{y\in V} (f(x)-f(y))\dfrac{\weight (x,y)}{\weight (x)}=\sum_{y\in V} (f(x)-f(y))p(x,y)
\end{equation}
for any $f \in \mathfrak F$.

Further, we define a \emph{transition operator} as  $\mathcal P=I-\mathcal L$ on $f \in \mathfrak F$. The entries of the corresponding matrix over $\K$ lie between $0$ and $1$, moreover, elements of each row/column sum up to $1\in\K$.

Note that for the classical weighted graphs $\mathcal P$ is a stochastic matrix of the corresponding Markov chain (see e.g. \cite{LPW}, \cite{Woess09} just to mention some among many). 

The Laplacian is a symmetric operator with respect to the introduced scalar product (i.e. $\langle\mathcal L f,g\rangle=\langle f,\mathcal L g\rangle$ for $f,g \in \mathfrak F$), due to the \emph{Green formula} \cite[Thm. 22]{Muranova1}:
\begin{equation*}
\sum_{x\in V}\mathcal L f(x)g(x)\weight (x)=\dfrac{1}{2}\sum_{x,y\in V}(\nabla_{xy}f)(\nabla_{xy}g)\weight (x,y),
\end{equation*}
where $\nabla_{xy}f=f(y)-f(x)$ for any $f,g \in \mathfrak F$.

Moreover, it is symmetric for the scalar product $\langle \cdot , \cdot\rangle_C$ and  all the functions from $\mathfrak F_C$, since the analogue of Green formula for function from $\mathfrak F_c$ can be proven in the classical way (see e.g proof in \cite{Grigoryan} or proof in \cite[Thm. 22]{Muranova1}). Therefore,
\begin{align*}
&\sum_{x\in V}\mathcal L f(x)\overline{g(x)}\weight (x)=\sum_{x\in V}\mathcal L f(x)\overline{g(x)}\weight (x).
\end{align*}
Now we present here several lemmas, which are reformulations of known algebraic facts for the Laplacian as a linear operator on $\mathfrak F$. Moreover, we present proofs for some of them, since not all linear algebra techniques can be used over an arbitrary non-Archimedean ordered field.
\begin{lemma}
All eigenvalues of the Laplacian belong to $\K$. Moreover, all its eigenfunctions  belong to $ \mathfrak F$.
\end{lemma}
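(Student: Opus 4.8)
The plan is to exploit the fact that the Laplacian $\mathcal{L}$ is a symmetric operator with respect to the scalar product $\langle\cdot,\cdot\rangle$ (equivalently, with respect to $\langle\cdot,\cdot\rangle_C$ on $\mathfrak{F}_C$), together with the fact that $\K_C = \K(\mathbf i)$ is algebraically closed. Fix an enumeration $V = \{x_1,\dots,x_n\}$ and identify $\mathcal{L}$ with its matrix in this basis; since the matrix entries $p(x_i,x_j)$ lie in $\K$, the characteristic polynomial $\det(\mathcal{L} - t\,I)$ has coefficients in $\K$. Because $\K_C$ is algebraically closed, every root lies in $\K_C$, so there is at least an eigenvalue $\lambda \in \K_C$ and a corresponding nonzero eigenfunction $f \in \mathfrak{F}_C$ (the null space of $\mathcal{L} - \lambda I$ over $\K_C$ is nontrivial because the determinant vanishes). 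It then remains to show $\lambda \in \K$, i.e. that $\bar\lambda = \lambda$.

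The second step is the standard Hermitian argument, carried out over $\K_C$ rather than $\mathbb{C}$. Given $\mathcal{L}f = \lambda f$ with $f \neq 0$, compute $\langle \mathcal{L}f, f\rangle_C$ in two ways: on one hand it equals $\lambda \langle f,f\rangle_C$ by linearity; on the other hand, using symmetry of $\mathcal{L}$ for $\langle\cdot,\cdot\rangle_C$ and property (3*), $\langle \mathcal{L}f, f\rangle_C = \overline{\langle f, \mathcal{L}f\rangle_C} = \overline{\langle \mathcal{L}f, f\rangle_C}$, so $\langle \mathcal{L}f, f\rangle_C \in \K$ (it is fixed by conjugation). Since $\langle f,f\rangle_C = \sum_x f(x)\overline{f(x)}\,\weight(x) = \sum_x |f(x)|^2 \weight(x)$ is a sum of elements of $\K$ with $\weight(x) \succeq 0$, it lies in $\K$ and is $\succ 0$ for $f\neq 0$; hence $\lambda = \langle \mathcal{L}f,f\rangle_C / \langle f,f\rangle_C \in \K$. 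One should check that $\langle f,f\rangle_C$ being a sum of "norms squared" $f(x)\overline{f(x)}$ is genuinely in $\K$: writing $f(x) = a + b\mathbf i$ with $a,b\in\K$, we get $f(x)\overline{f(x)} = a^2 + b^2 \in \K$, which uses only that $\mathbf i^2 = -1$.

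For the eigenfunctions: once we know $\lambda \in \K$, the system $(\mathcal{L} - \lambda I)f = 0$ is a homogeneous linear system with coefficients in $\K$, and Gaussian elimination over the field $\K$ produces a nonzero solution $f \in \mathfrak{F}$. Thus every eigenvalue has an eigenfunction in the "real" space $\mathfrak{F}$, and by induction (peeling off eigenvalues on the orthogonal complement, which is $\mathcal{L}$-invariant by symmetry) one obtains a full system of eigenvalues in $\K$ with eigenfunctions in $\mathfrak{F}$; but for the statement as given it suffices to treat a single arbitrary eigenvalue.

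The main obstacle — and the reason the lemma needs a proof at all rather than a citation — is that over a non-Archimedean ordered field one cannot invoke analytic spectral theory or completeness arguments; everything must be purely algebraic. The key points to get right are: (i) that $\K_C$ being algebraically closed is exactly the right substitute for $\mathbb{C}$, guaranteeing existence of a root of the characteristic polynomial; (ii) that the Hermitian form $\langle\cdot,\cdot\rangle_C$ has the positivity property (3) so that the Rayleigh-quotient manipulation is legitimate (denominator nonzero); and (iii) that symmetry of $\mathcal{L}$ with respect to $\langle\cdot,\cdot\rangle_C$, which the excerpt has already established via the Green formula, yields $\overline{\langle\mathcal{L}f,f\rangle_C} = \langle\mathcal{L}f,f\rangle_C$. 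No convergence or limiting process is needed, so once these algebraic ingredients are in place the argument closes.
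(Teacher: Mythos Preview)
Your proof is correct and follows essentially the same line as the paper's: both use the Hermitian symmetry of $\mathcal{L}$ with respect to $\langle\cdot,\cdot\rangle_C$ together with positivity of $\langle f,f\rangle_C$ to force $\lambda=\overline{\lambda}$. The only minor difference is in the eigenfunction part: the paper writes any complex eigenfunction as $f=f_1+\mathbf{i}f_2$ with $f_1,f_2\in\mathfrak{F}$ and observes that $\mathcal{L}f_j=\lambda f_j$ for each $j$, whereas you argue that once $\lambda\in\K$ the homogeneous system $(\mathcal{L}-\lambda I)f=0$ already has a nonzero solution over $\K$ by Gaussian elimination. Both arguments are standard and equally short; the real/imaginary-part trick is marginally more explicit, while your version has the small advantage of making transparent that the full eigenspace over $\K$ has the same dimension as over $\K_C$.
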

\begin{proof}
Let $\lambda\in \K_C$ be an eigenvalue of $\mathcal L$ with an eigenfunction $f$, i.e. there exists $f\in \mathfrak F_c$, $f\not\equiv 0$ such that $\mathcal L f(x)=\lambda f(x)$ for any $x\in V$. Then 
\begin{align*}
&\lambda\langle f,f\rangle_C=\langle \lambda f,f\rangle_C=\langle \mathcal L f,f\rangle_C=\langle  f,\mathcal L f\rangle_C=\langle f,\lambda  f\rangle_C=\overline{\lambda}\langle f,f\rangle_C\\
\end{align*}
due to anti-symmetry. Therefore, dividing by $\langle f,f\rangle_C$ we obtain $\overline{\lambda}=\lambda$ from where follows, that $\lambda \in \K$.

Let $\lambda$ has an eigenfunction  $f\in \mathfrak F_C$. Then, there exists $f_1, f_2\in \mathfrak F$ such that $f=f_1+\mathbf i f_2$, and due to the linearity of $\mathcal L$,
\begin{equation*}
\mathcal L f =\mathcal L f_1+\mathbf i\mathcal L f_2=\lambda f_1+\mathbf i\lambda f_2
\end{equation*}
from where follows
\begin{align*}
\mathcal L f_1&=\lambda f_1\\
\mathcal L f_2&=\lambda f_2.
\end{align*}
Moreover, since $f\not\equiv 0$, at least one of the functions $f_1, f_2$ is not constantly zero and, therefore, it is an eigenfunction of $\mathcal L$.

\end{proof}

\begin{lemma}
All the eigenvalues of the Laplacian are precisely the roots of characteristic polynomial of its matrix.
\end{lemma}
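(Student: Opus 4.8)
The plan is to show the two inclusions: every root of the characteristic polynomial of the matrix of $\mathcal L$ is an eigenvalue, and conversely. The subtlety over a non-Archimedean ordered field is that we cannot invoke the usual real/complex spectral theory; we must argue purely algebraically, working inside the algebraically closed field $\K_C$. Fix a basis of $\mathfrak F$ (indexed by $V$) and let $A\in \K^{V\times V}$ be the matrix of $\mathcal L$ with respect to it, so that $\mathcal L f$ corresponds to $A\mathbf f$ where $\mathbf f$ is the coordinate vector of $f$. The characteristic polynomial is $\chi(t)=\det(tI-A)\in\K[t]$.

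First I would prove the easy direction: if $\lambda\in\K_C$ is an eigenvalue of $\mathcal L$ with eigenfunction $f\not\equiv 0$ in $\mathfrak F_C$, then $A\mathbf f=\lambda\mathbf f$ with $\mathbf f\neq 0$, so $(\lambda I-A)\mathbf f=0$, hence $\lambda I-A$ is singular as a matrix over $\K_C$, hence $\chi(\lambda)=\det(\lambda I-A)=0$. Here I use only that the determinant of a matrix with a nontrivial kernel vanishes, which is a purely polynomial-identity fact valid over any field (Cramer's rule / the adjugate identity $\operatorname{adj}(M)\,M=\det(M)\,I$). For the converse, suppose $\lambda\in\K_C$ satisfies $\chi(\lambda)=0$, i.e. $\det(\lambda I-A)=0$. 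Then the matrix $M=\lambda I-A$ over the field $\K_C$ has zero determinant, so by the same adjugate identity its rows are linearly dependent over $\K_C$; equivalently the homogeneous system $M\mathbf f=0$ has a nontrivial solution $\mathbf f\in\K_C^{V}$ (this is just Gaussian elimination over a field, which needs no archimedean or ordering hypothesis). That nonzero $\mathbf f$ is the coordinate vector of a nonzero $f\in\mathfrak F_C$ with $\mathcal L f=\lambda f$, so $\lambda$ is an eigenvalue. Combining this with the previous Lemma, which forces every eigenvalue to in fact lie in $\K$ and every eigenfunction to be taken in $\mathfrak F$, gives the statement as phrased.

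The only point that needs care — and what I expect to be the main (though still modest) obstacle — is to make sure every linear-algebra ingredient used is genuinely field-theoretic rather than secretly analytic: namely, that $\det$ is defined by its usual polynomial formula over any commutative ring, that the adjugate identity holds formally, and that a square homogeneous linear system over a field has a nontrivial solution precisely when the coefficient determinant vanishes. All three are standard and ordering-free, so no estimates or limits are involved; the proof is essentially a bookkeeping argument once one commits to working over $\K_C$ and then descends to $\K$ via the preceding Lemma. I would also remark that, because $\K_C$ is algebraically closed, $\chi$ splits completely over $\K_C$, so the multiset of roots of $\chi$ is nonempty and has size $\#V$ counted with multiplicity; this is consistent with, though not needed for, the equivalence just proved.
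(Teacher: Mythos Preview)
Your argument is correct: the equivalence between eigenvalues and roots of the characteristic polynomial is a purely field-theoretic fact, and you carefully isolate the needed ingredients (the polynomial definition of $\det$, the adjugate identity, and solvability of singular homogeneous systems over any field), none of which require an archimedean or ordering hypothesis. The paper, by contrast, does not give a direct proof at all; it simply cites a general theorem from Lang's \emph{Algebra} and moves on. So your route is genuinely different in that it is self-contained and makes explicit why the statement survives over an arbitrary (non-Archimedean) ordered field, whereas the paper's approach is maximally brief but leaves the reader to verify that the cited reference applies in this generality. Your version would be the more informative one to include if the goal were to reassure a reader unfamiliar with linear algebra over general fields; the paper's version is appropriate given that the result is standard.
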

\begin{proof}
Follows from more general theorem, see e.g. \cite[Theorem 3.2 on p.562]{Lang}
\end{proof}

\begin{lemma}
There exists an orthonormal basis ($\langle v_i, v_j\rangle = 0$ and $\|v_i\|=1$ for any $i,j=\overline{1,n}, i\ne j$) of eigenfunctions of the Laplacian, i.e. the matrix of Laplacian is diagonilazable over $\K$.
\end{lemma}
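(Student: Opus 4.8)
The plan is to reduce the statement to an abstract fact — the spectral theorem for a symmetric operator on a finite-dimensional positive-definite inner product space over $\K$ whose characteristic polynomial splits over $\K$ — and to prove that fact by induction on $n=\#V$, following the purely algebraic (rather than analytic) version of the classical proof, so that no use is made of compactness, of maximizing a Rayleigh quotient, or of any other non-Archimedean-unfriendly device. The base case $n=1$ is immediate. For the inductive step I would first produce one eigenvalue in $\K$: by the two preceding lemmas the characteristic polynomial of the matrix of $\mathcal L$ has degree $n\ge 1$, splits into linear factors over the algebraically closed field $\K_C$, and all of its roots lie in $\K$; hence there is an eigenvalue $\lambda_1\in\K$ with an eigenfunction $v_1\in\mathfrak F$ (passing, if necessary, to the nonzero real or imaginary part exactly as in the proof of Lemma~1). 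Since $\langle v_1,v_1\rangle\succ 0$ and $\K$ is real-closed, positive elements have square roots, so after rescaling we may assume $\|v_1\|=1$.

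Next I would pass to the orthogonal complement $W=\{f\in\mathfrak F:\langle f,v_1\rangle=0\}$. As $\langle v_1,v_1\rangle\ne 0$, the linear functional $f\mapsto\langle f,v_1\rangle$ is nonzero and $v_1\notin W$, so $\mathfrak F=\lspan(v_1)\oplus W$ with $\dim W=n-1$, and the restriction of $\langle\cdot,\cdot\rangle$ to $W$ is again positive definite. The crucial observation is that $W$ is $\mathcal L$-invariant: for $f\in W$, symmetry of $\mathcal L$ gives $\langle\mathcal L f,v_1\rangle=\langle f,\mathcal L v_1\rangle=\lambda_1\langle f,v_1\rangle=0$. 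Choosing a basis adapted to the decomposition $\mathfrak F=\lspan(v_1)\oplus W$ makes the matrix of $\mathcal L$ block-diagonal, so its characteristic polynomial equals $(t-\lambda_1)$ times the characteristic polynomial of $\mathcal L|_W$; in particular every root of the latter is a root of the former, hence lies in $\K$. Thus $\mathcal L|_W$ is a symmetric operator on the $(n-1)$-dimensional positive-definite space $(W,\langle\cdot,\cdot\rangle)$ whose characteristic polynomial splits over $\K$, and the induction hypothesis supplies an orthonormal basis $v_2,\dots,v_n$ of $W$ consisting of eigenfunctions of $\mathcal L|_W$, hence of $\mathcal L$. Since $v_2,\dots,v_n\in W$, they are orthogonal to $v_1$, so $v_1,v_2,\dots,v_n$ is an orthonormal basis of $\mathfrak F$ of eigenfunctions of $\mathcal L$; in this basis the matrix of $\mathcal L$ is $\operatorname{diag}(\lambda_1,\dots,\lambda_n)$ over $\K$, which is the claim.

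I expect the only genuinely delicate point to be the first step of the inductive argument — guaranteeing at each stage that the restricted operator still has an eigenvalue in $\K$, since over a general non-Archimedean ordered field neither existence of eigenvalues nor their membership in the base field is automatic. Here this is exactly what Lemma~1 (through the anti-symmetry computation in $\K_C$) together with Lemma~2 provides at the top level, and what the block-diagonal factorization of the characteristic polynomial propagates to the subspace $W$. The remaining ingredients — that positive-definiteness, and hence the legitimacy of normalizing by a square root in the real-closed field $\K$, is inherited by $W$, and that $\mathcal L$-invariance of $W$ follows from symmetry — are routine once stated, and everything else is the standard inductive bookkeeping.
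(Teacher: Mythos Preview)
Your proof is correct and is essentially the same argument the paper invokes: the paper's proof consists of a citation to Lang's Lemma~6.3 and Spectral Theorem~6.4, whose content is precisely the inductive orthogonal-complement argument you have written out in full. Your version has the merit of being self-contained and of making explicit where real-closedness of $\K$ enters (existence of square roots for normalization, and splitting of the characteristic polynomial), but mathematically there is no difference in approach.
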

\begin{proof}
Follows from the proof of Lemma 6.3 in \cite[p. 582]{Lang}, and proof of the Spectral Theorem 6.4 there.
\end{proof}

Therefore, the Laplacian has exactly $n=\#V$ eigenvalues 
$$
\lambda_0\preceq\lambda_1\preceq\dots \preceq\lambda_{n-1}
$$ 
and there are $n$ orthonormal eigenfunctions $v_0,v_1\dots,v_{n-1}$, forming a basis of $\mathfrak F$. These notations for eigenvalues and eigenfunctions of the Laplacian will be used through the entire paper.

\begin{lemma}\label{Rq}
Set 
\begin{equation*}
\mathcal R(f)=\dfrac{\langle\mathcal L f,f\rangle}{\langle f,f\rangle}
\end{equation*}
for $f\in \mathfrak F\setminus\{0\}$ (the function $\mathcal R$ is called the \emph{Rayleigh quotient} of $\mathcal L$). The following is true for all $k=0,\dots,n-1$:
\begin{equation}\label{lambdaMax}
\lambda_k\preceq \mathcal R(f) \mbox{ for any } f\in\mathfrak F\mbox{ with }\langle f,v_i\rangle=0\; \mbox{ for all }\; i=\overline{0,k-1},
\end{equation}
 \begin{equation}\label{lambdaMin}
\lambda_k \succeq \mathcal R(f) \mbox{ for any }{f \in\mathfrak F\mbox{ with }\langle f,v_i\rangle=0\; \mbox{ for all }\; i=\overline{k+1,n-1}}.
\end{equation}
Moreover,
 \begin{equation}
\lambda_k =\min_{\substack{f:\langle f,v_i\rangle=0\\ \forall\; i=0,\dots,k-1}}\mathcal R(f) =\max_{\substack{f:\langle f,v_i\rangle=0\\ \forall\; i=k+1,\dots,n-1}}\mathcal R(f),
\end{equation}
where maximum and minimum are attained on the corresponding eigenfunction $v_k$.
\end{lemma}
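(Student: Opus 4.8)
The plan is to reduce everything to a coordinate computation with respect to the orthonormal eigenbasis $v_0,\dots,v_{n-1}$ furnished by the preceding lemma, so that the Rayleigh quotient becomes an explicit ratio of $\K$-linear combinations of the $\lambda_i$, after which the order axioms of $\K$ finish the argument. Nothing analytic (limits, compactness) is needed: the classical proof transfers verbatim once one checks that the algebraic steps use only the ordered-field structure.

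First I would fix $f\in\mathfrak F\setminus\{0\}$ and expand $f=\sum_{i=0}^{n-1}c_i v_i$ with $c_i=\langle f,v_i\rangle\in\K$. Using bilinearity and orthonormality one gets $\langle f,f\rangle=\sum_i c_i^2$ and $\langle\mathcal L f,f\rangle=\langle\sum_i\lambda_i c_i v_i,\sum_j c_j v_j\rangle=\sum_i\lambda_i c_i^2$, hence $\mathcal R(f)=\bigl(\sum_i\lambda_i c_i^2\bigr)/\bigl(\sum_i c_i^2\bigr)$. At this point I would record that $f\not\equiv 0$ forces $\langle f,f\rangle\succ 0$ by property (3) of the scalar product, so the division is legitimate, and that $c_i^2\succeq 0$ for every $i$.

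For \eqref{lambdaMax}, the hypothesis $\langle f,v_i\rangle=0$ for $i=0,\dots,k-1$ means $c_0=\dots=c_{k-1}=0$, so the sums run only over $i\ge k$; there $\lambda_i\succeq\lambda_k$, so multiplying by $c_i^2\succeq 0$ and summing gives $\sum_{i\ge k}\lambda_i c_i^2\succeq\lambda_k\sum_{i\ge k}c_i^2$, and dividing by the strictly positive denominator yields $\mathcal R(f)\succeq\lambda_k$. Inequality \eqref{lambdaMin} is the mirror image: the hypothesis kills $c_{k+1},\dots,c_{n-1}$, and now $\lambda_i\preceq\lambda_k$ for $i\le k$ gives $\mathcal R(f)\preceq\lambda_k$. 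Finally $v_k$ itself satisfies both orthogonality conditions (by orthonormality) and $\mathcal R(v_k)=\lambda_k$, so the bounds in \eqref{lambdaMax} and \eqref{lambdaMin} are attained, which gives the stated min/max equalities.

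I do not expect a genuine obstacle. The only points requiring care are (i) the invertibility of $\langle f,f\rangle$ in $\K$, which is exactly property (3) of the scalar product, and (ii) replacing the usual monotonicity arguments by the ordered-field facts $a\succeq b,\ c\succeq 0\Rightarrow ac\succeq bc$ and closure of $\K^+$ under addition. These same facts explain why, unlike over $\mathbb R$ where one might invoke continuity on a sphere, here the extremum is realized by an actual eigenfunction purely for formal reasons.
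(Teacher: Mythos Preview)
Your proof is correct and follows essentially the same approach as the paper: expand $f$ in the orthonormal eigenbasis, compute $\mathcal R(f)$ as a weighted average of the $\lambda_i$, and use the ordering $\lambda_0\preceq\cdots\preceq\lambda_{n-1}$ together with the ordered-field axioms. The paper's proof is in fact terser than yours, merely indicating the basis decomposition and leaving the routine inequality manipulation implicit.
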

\begin{proof}
The fact that $\lambda_k=\mathcal R(v_k)$ follows from the definitions of Rayleigh quotient and an eigenvalue.

The inequalities \eqref{lambdaMax} and \eqref{lambdaMin} follow from considering
Rayleigh quotient for the basis decomposition  $f=f_{k}v_{k}+\dots+f_{n-1}v_{n-1}$ and $f=f_0v_0+\dots+f_{k}v_{k}$ for any function from the given subspaces.

\end{proof}

Further we present some facts, which are known for classical weighted graphs (see e.g. \cite{Chung}, \cite{Grigoryan}) and have their analogues on graphs over ordered fields. We also comment on their proofs for graphs over ordered fields.
\begin{theorem}\label{thm1}
For any finite graph over with $\#V>1$ the following is true:
\begin{enumerate}
\item
$\lambda_0=0$ is a simple eigenvalue of $\mathcal L$ with constant eigenfunction.
\item
An eigenfunction $v_1$, corresponding to the first non-zero eigenvalue satisfies $\langle v_1,1\rangle=0$
\item
All the eigenvalues of $\mathcal L$ are contained in $[0,2]$.
\item
If the graph is not bipartite then all the eigenvalues of $\mathcal L$ are in $[0,2)$.
\item
If the graph is bipartite and $\lambda$ is an eigenvalue of $\mathcal L$, then $2-\lambda$ is an eigenvalue of $\mathcal L$ with the same multiplicity.  
\end{enumerate}
\end{theorem}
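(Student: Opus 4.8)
The plan is to derive all five items from Green's formula together with two order-theoretic facts valid in any ordered field: a finite sum of elements each $\succeq 0$ equals $0$ only if every summand is $0$, and the Rayleigh-quotient characterization of Lemma~\ref{Rq}. No analytic input (limits, continuity, compactness) is needed, so the passage from $\mathbb R$ to $\K$ is harmless as long as one works with algebraic identities and order inequalities only.

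First I would record two ``square'' identities. Putting $g=f$ in Green's formula gives
$$
\langle\mathcal L f,f\rangle=\tfrac12\sum_{x,y\in V}(f(y)-f(x))^2\weight(x,y),
$$
and, expanding the square and regrouping by means of $\weight(x,y)=\weight(y,x)$ and $\sum_y\weight(x,y)=\weight(x)$,
$$
2\langle f,f\rangle-\langle\mathcal L f,f\rangle=\tfrac12\sum_{x,y\in V}(f(x)+f(y))^2\weight(x,y).
$$
Both right-hand sides are $\succeq 0$, so $0\preceq\mathcal R(f)\preceq 2$ for every $f\in\mathfrak F\setminus\{0\}$; by Lemma~\ref{Rq} this gives $0\preceq\lambda_k\preceq 2$ for all $k$, which is item (3). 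Since $\mathcal L$ annihilates the constant function $1$, that function is an eigenfunction for the eigenvalue $0$, so $\lambda_0\preceq\mathcal R(1)=0$ and hence $\lambda_0=0$. For simplicity: if $\mathcal L f=0$ then the first identity forces $(f(y)-f(x))^2\weight(x,y)=0$ for all $x,y$, so $f(x)=f(y)$ whenever $x\sim y$, and connectedness makes $f$ constant; thus the $0$-eigenspace is one-dimensional, proving (1). Then (2) is immediate: $v_0$ spans the $0$-eigenspace, so $v_0=c\cdot 1$ with $c\ne 0$, and orthonormality of the eigenbasis gives $\langle v_1,1\rangle=c^{-1}\langle v_1,v_0\rangle=0$.

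For (4) I would argue by contraposition. Suppose $2$ is an eigenvalue, with eigenfunction $f\ne 0$. Then $2\langle f,f\rangle-\langle\mathcal L f,f\rangle=0$, so the second identity forces $(f(x)+f(y))^2\weight(x,y)=0$ for all $x,y$, i.e. $f(y)=-f(x)$ along every edge. If $f$ vanished at some vertex it would vanish at all its neighbours and, by connectedness, identically, contradicting $f\ne 0$; hence $f$ is nowhere zero, and then $V_1=\{x:f(x)\succ 0\}$, $V_2=\{x:f(x)\prec 0\}$ partitions $V$ with every edge joining $V_1$ to $V_2$, so the graph is bipartite. Thus a non-bipartite graph has no eigenvalue $2$, and together with (3) all its eigenvalues lie in $[0,2)$.

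For (5), fix a bipartition $V=V_1\cup V_2$ and let $\sigma\colon\mathfrak F\to\mathfrak F$ be the involution defined by $(\sigma f)(x)=f(x)$ on $V_1$ and $(\sigma f)(x)=-f(x)$ on $V_2$. Since $\weight(x,y)\ne 0$ forces $x,y$ to lie in opposite classes, a direct computation with the defining sum of $\mathcal L$, using $\sum_y p(x,y)=1$, yields $\mathcal L(\sigma f)=\sigma\bigl((2I-\mathcal L)f\bigr)$, i.e. $\sigma^{-1}\mathcal L\sigma=2I-\mathcal L$. Consequently $\mathcal L f=\lambda f$ implies $\mathcal L(\sigma f)=(2-\lambda)\sigma f$, and as $\sigma$ is a linear bijection it carries the $\lambda$-eigenspace isomorphically onto the $(2-\lambda)$-eigenspace, so the two multiplicities coincide. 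The one genuinely non-classical step, which I would write out in full, is the repeated appeal to ``a finite sum of nonnegative elements of $\K$ vanishes iff each term does'' in (1) and (4) --- this is exactly where the ordered field takes the place of $\mathbb R$; the identity $\sigma^{-1}\mathcal L\sigma=2I-\mathcal L$ in (5) is the most computational point, but it is a routine rearrangement once the edge-incidence property of the bipartition is invoked.
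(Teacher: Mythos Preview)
Your proof is correct and follows essentially the same route as the paper's: the paper simply states that the argument ``follows exactly the same outline as the proof of Theorem 2.3 and Theorem 2.6 in \cite{Grigoryan}'', and what you have written is precisely that classical outline (Green's formula giving the two quadratic identities, the Rayleigh-quotient bound for (3), connectedness for simplicity in (1), the sign-flip involution for (5)), carried out with the care needed over an arbitrary ordered field. Your explicit emphasis on the step ``a finite sum of nonnegative elements of $\K$ vanishes iff each term does'' is exactly the point where $\K$ replaces $\mathbb R$, and your verification of $\sigma^{-1}\mathcal L\sigma=2I-\mathcal L$ is the standard computation.
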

\begin{proof}
Follows exactly the same outline as the proof of Theorem 2.3 and Theorem 2.6 in \cite{Grigoryan}.
\end{proof}

\begin{corollary}[see e.g. \cite{Chung}, \cite{Grigoryan}]
$\lambda_{n-1}=2$ is a simple eigenvalue of any bipartite graph with eigenfunction $c=const$ on $V_1$ and $-c$ on $V_2$ (where $V=V_1\cup V_2$ is the partition).
\end{corollary}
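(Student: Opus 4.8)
The plan is to read the three assertions (largest eigenvalue equals $2$, simplicity, and the shape of the eigenfunction) off Theorem \ref{thm1} together with one short direct computation. First I would use part (3) of Theorem \ref{thm1} to get $\lambda_{n-1}\preceq 2$, so that it is enough to produce $2$ as an eigenvalue and to pin down its multiplicity. By part (1), $\lambda_0=0$ is a simple eigenvalue of $\mathcal L$, and applying part (5) with $\lambda=0$ shows that $2-0=2$ is also an eigenvalue, with multiplicity equal to that of $0$, that is, equal to $1$. Together with $\lambda_{n-1}\preceq 2$ this forces $\lambda_{n-1}=2$ and its simplicity.

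Next I would identify the eigenfunction. Because the graph is finite and connected, its bipartition $V=V_1\cup V_2$ is unique up to interchanging $V_1$ and $V_2$, so the function $f\in\mathfrak F$ with $f\equiv c$ on $V_1$ and $f\equiv -c$ on $V_2$ (for any fixed $c\in\K\setminus\{0\}$) is well defined up to scaling. For $x\in V_1$ every vertex $y$ with $x\sim y$ lies in $V_2$, hence $f(x)-f(y)=2c$ for all such $y$; using $\sum_{y\in V}p(x,y)=1$ we obtain
\begin{equation*}
\mathcal L f(x)=\sum_{y\in V}(f(x)-f(y))\,p(x,y)=2c\sum_{y\in V}p(x,y)=2c=2f(x),
\end{equation*}
and the symmetric computation gives $\mathcal L f(x)=2f(x)$ for $x\in V_2$ as well. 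Hence $\mathcal L f=2f$, and since the eigenvalue $2$ has already been shown to be simple, $f$ spans its one-dimensional eigenspace, which is precisely the claimed description.

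I do not expect a genuine obstacle here: the argument is algebraically routine once Theorem \ref{thm1} is available, and the non-Archimedean nature of $\K$ enters only through the containment of the spectrum in $[0,2]$ established there. The two points worth a word are the uniqueness of the bipartition of a connected graph (so that the phrase ``the partition'', and hence the eigenfunction, is meaningful) and the identity $\sum_{y}p(x,y)=1$, which is immediate from $\weight(x)=\sum_{y}\weight(x,y)$ and the definition of $p$.
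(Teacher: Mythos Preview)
Your proposal is correct and is exactly the intended derivation: the paper states this result as an unproved corollary of Theorem~\ref{thm1} (with references to \cite{Chung}, \cite{Grigoryan}), and your argument---combining parts (1), (3), (5) of that theorem with a direct verification that the alternating function is an eigenfunction for $\lambda=2$---is precisely how the corollary is meant to follow.
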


\begin{proposition}\label{Prop1}
For any finite graph with $\#V=n$ the following is true:
\begin{equation}\label{sum:of:ev}
\sum_{i=1}^{n-1} \lambda_i = n.
\end{equation}
Consequently,
\begin{equation}\label{lambda1:n:n-1}
\lambda_1\preceq\dfrac{n}{n-1}
\end{equation}
and
\begin{equation}\label{lambdan-1:n:n-1}
\lambda_{n-1}\succeq\dfrac{n}{n-1}
\end{equation}
\end{proposition}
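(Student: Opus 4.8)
The plan is to identify $\sum_{i=1}^{n-1}\lambda_i$ with the trace of the matrix of $\mathcal L$ in the standard basis $\{\mathbf 1_x\}_{x\in V}$ of $\mathfrak F$ (where $\mathbf 1_x$ is the indicator of the vertex $x$), and then read off the two inequalities from the monotonicity of the ordering $\lambda_1\preceq\cdots\preceq\lambda_{n-1}$ together with $\lambda_0=0$, which is part of Theorem~\ref{thm1}.

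First I would compute the diagonal entries of this matrix. The coefficient of $f(x)$ in the expression $\mathcal L f(x)=\sum_{y\in V}(f(x)-f(y))p(x,y)$ is $\sum_{y\in V}p(x,y)=\sum_{y\in V}\weight(x,y)/\weight(x)=\weight(x)/\weight(x)=1$, using $\weight(x,x)=0$ and the definition $\weight(x)=\sum_y\weight(x,y)$. Hence every diagonal entry of the matrix of $\mathcal L$ equals $1\in\K$, so its trace is $n$. Next I would pass from the trace to the sum of eigenvalues: since the matrix of $\mathcal L$ is diagonalizable over $\K$, it is similar over $\K$ to $\mathrm{diag}(\lambda_0,\dots,\lambda_{n-1})$, and the trace is invariant under conjugation (equivalently, the characteristic polynomial splits over $\K$ and Vieta's formulas apply). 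Therefore $\sum_{i=0}^{n-1}\lambda_i=n$, and since $\lambda_0=0$ by Theorem~\ref{thm1} this is exactly \eqref{sum:of:ev}.

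Finally, the two remaining inequalities follow formally from \eqref{sum:of:ev} and the ordering. From $\lambda_1\preceq\lambda_i$ for all $i=1,\dots,n-1$ we get $(n-1)\lambda_1\preceq\sum_{i=1}^{n-1}\lambda_i=n$, hence \eqref{lambda1:n:n-1}; from $\lambda_{n-1}\succeq\lambda_i$ for all $i=1,\dots,n-1$ we get $(n-1)\lambda_{n-1}\succeq n$, hence \eqref{lambdan-1:n:n-1}. Here one uses that $n-1\succ 0$ in $\K$ (as $n\ge 2$), so that dividing by $n-1$ preserves the order.

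The only point requiring genuine care is the identification "trace $=$ sum of eigenvalues" in the non-Archimedean setting, since this relies on the characteristic polynomial splitting over $\K$ rather than merely over $\K_C$; but this is precisely what diagonalizability of $\mathcal L$ over $\K$ supplies, so no new difficulty arises. Everything else is the same bookkeeping as in the classical real case.
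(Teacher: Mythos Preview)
Your proof is correct and follows essentially the same approach as the paper: compute the trace of $\mathcal L$ (all diagonal entries equal $1$, so the trace is $n$), invoke basis-independence of the trace (equivalently, Vieta for the characteristic polynomial, which splits over $\K$ by diagonalizability) to identify it with $\sum_i\lambda_i$, and then deduce the two inequalities from $\lambda_0=0$ and the ordering of the eigenvalues. Your write-up is in fact more explicit than the paper's, which only states that \eqref{sum:of:ev} follows from the basis-independence of the trace and that \eqref{lambda1:n:n-1}, \eqref{lambdan-1:n:n-1} follow from \eqref{sum:of:ev} together with $\lambda_0=0$.
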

\begin{proof}
Equality \eqref{sum:of:ev} follows from the fact, that the trace of a matrix is independent of the basis. The latest can be proven over arbitrary ordered field by considering free coefficient of the trace of the characteristic polynomial of the matrix.
The equations \eqref{lambda1:n:n-1} and \eqref{lambdan-1:n:n-1} follows from \eqref{sum:of:ev} using $\lambda_ 0=0$.
\end{proof}

Now let us consider the operator $\mathcal P=I-\mathcal L$, where $I$ is an identity matrix $n\times n$.  Due to Theorem \ref{thm1} its  eigenvalues are
$$
\boxed{1=\alpha_0\succ\alpha_1\succeq\alpha_2\dots\succeq \alpha_{n-1}\succeq-1}
$$
where $\alpha_i=1-\lambda_i$) and belong to $\Bbb K$.  Moreover, we have
\begin{proposition}
For any finite graph with $\#V>1$ the following is true:
\begin{enumerate}
\item
The eigenfunction of  $\mathcal P$, corresponding to the eigenvalue $1$, is constant.
\item
If the graph is not bipartite then all the eigenvalues of $\mathcal P$ are in $(-1,1]$.
\end{enumerate}
\end{proposition}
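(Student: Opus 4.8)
The whole statement is a direct translation of Theorem~\ref{thm1} through the substitution $\mathcal P=I-\mathcal L$, so the plan is simply to make that translation precise. First I would record the elementary but essential observation that $\mathcal L$ and $\mathcal P$ have \emph{exactly} the same eigenfunctions: for $f\in\mathfrak F$ and $\lambda\in\K$ we have $\mathcal P f=(I-\mathcal L)f=f-\mathcal L f$, hence $\mathcal L f=\lambda f$ holds if and only if $\mathcal P f=(1-\lambda)f$. Consequently, for every $\alpha\in\K$ the $\alpha$-eigenspace of $\mathcal P$ coincides with the $(1-\alpha)$-eigenspace of $\mathcal L$; in particular eigenvalues correspond via $\alpha=1-\lambda$ and multiplicities are preserved. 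This is the only nontrivial bookkeeping, and it is immediate from linearity of the operators.

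For part (1), the eigenvalue $1$ of $\mathcal P$ corresponds to the eigenvalue $0$ of $\mathcal L$. By Theorem~\ref{thm1}(1), since $\#V>1$, the value $\lambda_0=0$ is a simple eigenvalue of $\mathcal L$ whose eigenspace is spanned by the constant function. By the correspondence above, the $1$-eigenspace of $\mathcal P$ is therefore one-dimensional and spanned by the constant function, so every eigenfunction of $\mathcal P$ for the eigenvalue $1$ is constant.

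For part (2), assume the graph is not bipartite. By Theorem~\ref{thm1}(3)–(4), every eigenvalue $\lambda_i$ of $\mathcal L$ satisfies $0\preceq\lambda_i\prec 2$. Applying $\alpha_i=1-\lambda_i$ and the order axioms of $\K$ (subtracting from $1$ reverses the inequalities), we get $-1\prec 1-\lambda_i\preceq 1$, i.e. $\alpha_i\in(-1,1]$ for all $i=0,\dots,n-1$. This is exactly the claim.

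I do not expect any genuine obstacle here: all the structural work — diagonalizability over $\K$, the location of the Laplacian spectrum in $[0,2]$, the strict upper bound in the non-bipartite case, and the simplicity of $\lambda_0=0$ — has already been carried out in the earlier lemmas and in Theorem~\ref{thm1}. The proof is purely a matter of transporting those facts through the affine change $\alpha=1-\lambda$, and the only point worth stating explicitly is the eigenspace correspondence in the first paragraph.
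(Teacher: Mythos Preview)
Your proposal is correct and follows exactly the route the paper intends: the paper does not give a separate proof of this Proposition but simply states it as an immediate consequence of Theorem~\ref{thm1} via the relation $\alpha_i=1-\lambda_i$, which is precisely the translation you carry out. The explicit eigenspace correspondence you spell out in the first paragraph is the only detail the paper leaves implicit, and it is indeed trivial.
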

Again, we will use these notations for eigenvalues of $\P$ through the entire note. Note, that its eigenfunctions are the same as for the corresponding Laplacian.

\section{Estimates on eigenvalues}
In this Section we present estimates on eigenvalues of Laplace and transition operaror for finite graph $(V,b)$ over non-Archimedean ordered field. The results are also known to hold for the field $\Bbb R$ (see e.g. \cite{BauerJost}, \cite{Grigoryan}).
In the classical case of weighted graphs the absolute values of eigenvalues $\alpha_1$ and $\alpha_{n-1}$ (the latest plays role in the case of non-bipartite graphs) determine the rate of convergence of random walk, i.e. the speed of convergence of $\P ^m f, m\in \Bbb N$ (or $\P ^{2m} f$ for bipartite graphs) for $f\in\mathfrak F$ to a function $\overline f\in \mathfrak F$, depending on $f$ and is related to a mixing time (see e.g. \cite{Grigoryan}, \cite{LPW}).
In this Section, we present estimates on these eigenvalues for graphs over ordered fields. The main result here is the Cheeger's inequality in its stronger formulation for graphs over ordered fields (Theorem \ref{Theorem::Cheeger}).

By \emph{absolute value of $k\in \K$} we mean the value $|k|\in \K^+\cup\{0\}$ defined as follows
\begin{equation*}
|k|=\begin{cases}
k, \mbox{ if }k\succeq 0\\
-k, \mbox{otherwise.}
\end{cases}
\end{equation*}

\subsection{Cheeger's inequality}
\begin{lemma}\label{lemma::non-complete}
For any non-complete graph $\alpha_1\succeq 0$.
\end{lemma}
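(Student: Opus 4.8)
The plan is to exhibit, for any non-complete graph, a test function $f$ with Rayleigh quotient $\R(f) \preceq 1$, and then invoke the min–max characterization in Lemma~\ref{Rq}. Recall that $\alpha_1 = 1 - \lambda_1$, so the inequality $\alpha_1 \succeq 0$ is equivalent to $\lambda_1 \preceq 1$. By equation~\eqref{lambdaMax} with $k=1$, it suffices to produce a single $f \in \mathfrak F$ with $\langle f, v_0\rangle = \langle f, 1\rangle = 0$ (i.e.\ $f$ is orthogonal to the constants) and $\R(f) \preceq 1$; then $\lambda_1 \preceq \R(f) \preceq 1$.

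Since the graph is non-complete, there exist two distinct vertices $x_0, y_0 \in V$ with $x_0 \not\sim y_0$, that is $\weight(x_0,y_0) = 0$. The natural choice is the function $f$ supported on $\{x_0, y_0\}$ that is orthogonal to the constants: set $f(x_0) = \weight(y_0)$, $f(y_0) = -\weight(x_0)$, and $f \equiv 0$ elsewhere. Then $\langle f, 1\rangle = f(x_0)\weight(x_0) + f(y_0)\weight(y_0) = \weight(y_0)\weight(x_0) - \weight(x_0)\weight(y_0) = 0$, as required. It remains to compute $\R(f)$ for this $f$. The denominator is $\langle f,f\rangle = \weight(y_0)^2\weight(x_0) + \weight(x_0)^2\weight(y_0) = \weight(x_0)\weight(y_0)(\weight(x_0)+\weight(y_0))$. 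For the numerator I will use the Green formula quoted in Section~\ref{section::LP}: $\langle \L f, f\rangle = \tfrac12 \sum_{x,y\in V}(\nabla_{xy}f)^2 \weight(x,y)$. Because $f$ is supported on $\{x_0,y_0\}$ and $\weight(x_0,y_0)=0$, the only surviving terms are those edges joining $x_0$ (resp.\ $y_0$) to vertices outside $\{x_0,y_0\}$, where $\nabla_{xy}f$ equals $\pm f(x_0)$ or $\pm f(y_0)$. Summing, $\langle \L f, f\rangle = f(x_0)^2(\weight(x_0)-\weight(x_0,y_0)) + f(y_0)^2(\weight(y_0)-\weight(x_0,y_0)) = \weight(y_0)^2\weight(x_0) + \weight(x_0)^2\weight(y_0) = \langle f, f\rangle$, using $\weight(x_0,y_0)=0$ and $\weight(x) = \sum_y \weight(x,y)$. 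Hence $\R(f) = 1$, so $\lambda_1 \preceq 1$ and $\alpha_1 = 1 - \lambda_1 \succeq 0$.

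The only point requiring a little care is the bookkeeping in the Green-formula computation — in particular, making sure the cross term between $x_0$ and $y_0$ genuinely drops out (it does, precisely because of non-completeness) and that $\langle f,f\rangle \succ 0$ so that dividing is legitimate (it is, since $\weight(x_0), \weight(y_0) \succ 0$ for a connected graph with $\#V > 1$, and the field order behaves well under the product and sum of positive elements). One should also confirm $f \not\equiv 0$, which holds because $\weight(x_0), \weight(y_0) \neq 0$. I do not expect any genuine obstacle here; the essential insight is simply that a missing edge lets one build a constants-orthogonal test function whose Rayleigh quotient is exactly $1$, which is exactly the threshold needed.
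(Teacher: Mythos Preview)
Your proof is correct and follows essentially the same approach as the paper: both construct a test function supported on two non-adjacent vertices, orthogonal to the constants, and use the Green formula to show its Rayleigh quotient equals $1$, whence $\lambda_1 \preceq 1$ via Lemma~\ref{Rq}. The only cosmetic difference is that you pick the values $f(x_0)=\weight(y_0)$, $f(y_0)=-\weight(x_0)$ explicitly, whereas the paper writes $f(z_1)=c_1$, $f(z_2)=c_2$ with $c_1,c_2$ chosen abstractly to satisfy $\langle f,1\rangle=0$.
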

\begin{proof}
The proof follows the classical outline for the proof of fact, that $\lambda_1\le 1$, see e.g. \cite[p.44]{Grigoryan}. We present the proof here in a shorter form.

Let us take two vertices $z_1,z_2$ such that $z_1\not\sim z_2$ (it is possible, since the graph is non-complete) and consider  function
\begin{equation*}
f(x) = 
\begin{cases}
c_1, \mbox{ for } x = z_1\\
c_2, \mbox{ for } x = z_2\\
0, \mbox{ otherwise},
\end{cases}
\end{equation*}
where $c_1, c_2\in \K$, such that $\langle f,1\rangle=0$. Let us denote $V_{12}=V\setminus\{z_1,z_2\}$. Then $f(x)=0$ for $x\in V_{12}$.

We have $\langle f, f\rangle=c_1^2\weight (z_1)+c_2^2\weight(z_2)$ and, due to the Green formula,
\begin{align*}
\langle \mathcal L f,f\rangle&=\dfrac{1}{2}\sum_{x,y\in V}(f(y)-f(x))^2 \weight (x,y)\\
&=\dfrac{1}{2}\left(\sum_{\substack{x=z_1,\\ y\in V_{12}}}(f(y)-f(x))^2 \weight (x,y)+\sum_{\substack{x\in V_{12},\\y=z_1}}(f(y)-f(x))^2 \weight (x,y)\right)\\
&+\dfrac{1}{2}\left(\sum_{\substack{x=z_2, \\y\in V_{12}}}(f(y)-f(x))^2 \weight (x,y)+\sum_{\substack{x\in V_{12},\\y=z_2}}(f(y)-f(x))^2 \weight (x,y)\right)\\
&+\underbrace{\dfrac{1}{2}\left(\sum_{\substack{x=z_1,\\y=z_2}}(f(y)-f(x))^2 \weight (x,y)+\sum_{\substack{x=z_2,\\y=z_1}}(f(y)-f(x))^2 \weight (x,y)\right)}_{=0 \mbox{ since }z_1\not\sim z_2}\\
&=\sum_{\substack{x=z_1,\\y\in V_{12}}}(f(y)-f(x))^2 \weight (x,y)+\sum_{\substack{x=z_2,\\y\in V_{12}}}(f(y)-f(x))^2 \weight (x,y)+0\\
&=c_1^2\weight (z_1)+c_2^2\weight (z_2).
\end{align*}
Therefore, $\R(f)=1$. Then  from Lemma \ref{Rq} follows, that $\lambda_1\preceq 1$ and $\alpha_1\succeq~0$.
\end{proof}
Further in this subsection we discuss Cheeger constant for graphs over ordered fields. Cheeger isoperimetric constant was firstly introduced by Jeff Cheeger for manifolds in \cite{Cheeger} and then generalized to graphs (see e.g. \cite{BauerJost}, \cite{Chung}).

Firstly, we introduce the following notations for $\emptyset\ne S\subset V$:
\begin{equation*}
\weight(S)=\sum_{x\in S}\weight (x)
\end{equation*}
and
\begin{equation*}
\weight(\partial S)=\sum_{x\in S,y\in V\setminus S}\weight (x,y),
\end{equation*}
i.e. $\partial S$ is a subset of edges of $V$, which have their ends in both $S$ and $V\setminus S$.
\begin{definition}
Let $V$ be a finite connected graph with at least two vertices. The \emph{Cheeger constant} is defined by
\begin{equation}\label{Def:Cheeger}
h = \min_{\emptyset\ne S\subset V, \weight(S)\preceq\frac{1}{2}\weight(V) }\dfrac{\weight(\partial S)}{\weight(S)}= \min_{\emptyset\ne S\subset V}\dfrac{\weight(\partial S)}{\min\{\weight(S),\weight(V\setminus S)\}}
\end{equation}
\end{definition}
It holds $h\preceq 1$ (consider $S$, consisting of the one vertex with the smallest weight).

The most known estimates of the first non-zero eigenvalue of Laplacian for classical weighted graphs in terms of Cheeger constants are 
\begin{equation*}
\lambda_1\ge 1-\sqrt{1-h^2}
\end{equation*}
and
\begin{equation*}
\lambda_1\ge \dfrac{h^2}{2},
\end{equation*}
where the second follows from the first, but the second is easier to prove (\cite{BauerJost}, \cite{Chung}).
These estimates obviously lead to the following estimates for the eigenvalue $\alpha_1$ of transition operator for non-complete graphs: $\alpha_1\le \sqrt{1-h^2}$  and $\alpha_1\le 1-\frac{h^2}{2}$. 
In a non-Archimedean ordered field for the r.h.s. of the latest inequality holds 
$$
1-\frac{h^2}{2}\succeq \frac{1}{2}\succ \tau
$$
for any infinitesimal $\tau$. The latest means, that one can not use weaker version of Cheeger's inequality to investigate convergence of the analogue of random walk in non-Archimedean case (for the details see Section 5). Therefore, our next step is to prove the stronger Cheeger's inequality for graphs over ordered fields. Our proof is similar to the classical proof by Chung \cite{Chung} for unweighted graphs and to proof in \cite{BauerJost}, but the latest proof use integrals, which we can not use over ordered fields. We also do not use integral notations as, for example, in \cite{Grigoryan}. Moreover, we write our proof very accurately, to be sure that we use exclusively properties of an ordered field.

\begin{theorem}[Cheeger's inequality]\label{Theorem::Cheeger}
\begin{equation*}
\lambda_1\succeq 1-\sqrt{1-h^2}
\end{equation*}
\end{theorem}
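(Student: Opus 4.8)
The plan is to run Chung's truncated-eigenfunction argument for Cheeger's inequality, but to carry out every estimate inside the ordered field $\K$ — in particular with no integrals, limits, or real-analytic rearrangement.

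\textbf{Step 1 (a test function).} Let $f=v_1$ be an eigenfunction of $\mathcal L$ for $\lambda_1$; by Theorem~\ref{thm1}(2) we have $\langle f,1\rangle=0$, so (each $\weight(x)\succ0$ since the graph is connected with $\#V\ge2$) $f$ is strictly positive somewhere and strictly negative somewhere. As $\{f\succ0\}$ and $\{f\prec0\}$ are disjoint, one of them has $\weight$-measure $\preceq\tfrac12\weight(V)$; flipping the sign of $f$ if necessary, assume this holds for $S:=\{f\succ0\}$. Put $g:=\max\{f,0\}\in\mathfrak F$, so $g\succeq0$, $g\not\equiv0$, and $\supp g=S$ with $\weight(S)\preceq\tfrac12\weight(V)$.

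\textbf{Step 2 ($\mathcal R(g)\preceq\lambda_1$).} For $x\in S$ we have $g(x)=f(x)$ while $g(y)\succeq f(y)$ for every $y$, so, using $p(x,y)\succeq0$,
$$\mathcal L g(x)=\sum_{y}(g(x)-g(y))p(x,y)\preceq\sum_{y}(f(x)-f(y))p(x,y)=\lambda_1 f(x)=\lambda_1 g(x).$$
Multiplying by $g(x)\weight(x)\succeq0$ and summing over $x$ (the terms with $x\notin S$ vanish) yields $\langle\mathcal L g,g\rangle\preceq\lambda_1\langle g,g\rangle$, i.e.\ $\mathcal R(g)\preceq\lambda_1$.

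\textbf{Step 3 (a Cheeger lower bound for $\mathcal R(g)$).} Order the vertices $V=\{v_1,\dots,v_n\}$ so that $g(v_1)\succeq\cdots\succeq g(v_n)\succeq0$, let $S_k=\{v_1,\dots,v_k\}$, and set $Q=\tfrac12\sum_{x,y\in V}\bigl|g(x)^2-g(y)^2\bigr|\weight(x,y)$. Splitting each $g(v_i)^2-g(v_j)^2$ (for $i<j$) into a telescoping sum of the intermediate ``layers'' $g(v_k)^2-g(v_{k+1})^2$ and noting that $\{v_i,v_j\}\in\partial S_k$ exactly when $i\le k<j$, one gets the finite identity (the ordered-field substitute for the classical co-area computation)
$$Q=\sum_{k=1}^{n-1}\bigl(g(v_k)^2-g(v_{k+1})^2\bigr)\,\weight(\partial S_k).$$
Only $k$ with $S_k\subseteq\supp g$ contribute, and for those $\emptyset\ne S_k\subsetneq V$ and $\weight(S_k)\preceq\weight(S)\preceq\tfrac12\weight(V)$, so $\weight(\partial S_k)\succeq h\,\weight(S_k)$ by the definition of $h$; re-summing (telescoping again) gives $Q\succeq h\langle g,g\rangle$. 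On the other hand $|g(x)^2-g(y)^2|=|g(x)-g(y)|\,(g(x)+g(y))$ since $g\succeq0$, so the Cauchy--Schwarz inequality over $\K$ (valid via the Lagrange identity, a sum of squares) gives
$$Q\preceq\Bigl(\tfrac12\sum_{x,y\in V}(g(x)-g(y))^2\weight(x,y)\Bigr)^{1/2}\Bigl(\tfrac12\sum_{x,y\in V}(g(x)+g(y))^2\weight(x,y)\Bigr)^{1/2}.$$
By the Green formula the first parenthesis equals $\langle\mathcal L g,g\rangle$, and since $(g(x)-g(y))^2+(g(x)+g(y))^2=2g(x)^2+2g(y)^2$ the two parentheses sum to $2\langle g,g\rangle$, so the second equals $2\langle g,g\rangle-\langle\mathcal L g,g\rangle$. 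Writing $r=\mathcal R(g)$, combining $h\langle g,g\rangle\preceq Q$ with the Cauchy--Schwarz bound, squaring (both sides are $\succeq0$), and dividing by $\langle g,g\rangle^2\succ0$ gives $h^2\preceq r(2-r)=1-(1-r)^2$, hence $(1-r)^2\preceq1-h^2$. Since $1-h^2\succeq0$ (because $h\preceq1$) and square roots of nonnegative elements exist in the real-closed field $\K$, this forces $1-r\preceq\sqrt{1-h^2}$, i.e.\ $\mathcal R(g)\succeq1-\sqrt{1-h^2}$. Combining with Step~2 gives $\lambda_1\succeq\mathcal R(g)\succeq1-\sqrt{1-h^2}$.

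\textbf{Expected main difficulty.} The work is concentrated in Step~3: the classical proofs encode the co-area estimate either through an integral over level values (as in \cite{BauerJost}) or through real-analytic rearrangement, and these must be replaced by the explicit finite telescoping identities above. Beyond that, the whole argument only uses that Cauchy--Schwarz, the monotonicity of $t\mapsto\sqrt{t}$, squaring and ``unsquaring'' of nonnegative inequalities, and the existence of $\sqrt{1-h^2}$ are all available in an arbitrary real-closed ordered field — which is precisely what has to be checked line by line.
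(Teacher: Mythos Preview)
Your proof is correct and follows the same truncated-eigenfunction/co-area route as the paper: truncate $v_1$ to its positive part $g$, bound $\lambda_1$ below by $\mathcal R(g)$ (your Step~2 is the paper's Lemma~\ref{Lemma:lambda1g}, done pointwise instead of via Green's formula), rewrite $Q=\tfrac12\sum|g^2(x)-g^2(y)|\weight(x,y)$ by telescoping over level sets, apply Cauchy--Schwarz, and solve the resulting quadratic in $r=\mathcal R(g)$. The only differences are presentational: you order vertices by decreasing $g$ and apply the Cheeger bound $h$ directly (having arranged $\weight(\supp g)\preceq\tfrac12\weight(V)$), whereas the paper orders by increasing $v_1$ and passes through an intermediate constant $\beta\succeq h$; and you handle the ``plus'' term via the one-line identity $(g(x)-g(y))^2+(g(x)+g(y))^2=2g(x)^2+2g(y)^2$, while the paper obtains the same relation through a longer Green's-formula manipulation.
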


Firstly, we prove several Lemmas for $\lambda_1$.

\begin{lemma}\label{Lemma:lambda1g}
\begin{equation}\label{lambda1g}
\lambda_1\succeq \dfrac{\frac{1}{2}\sum_{x,y}\weight (x,y)(g(x)-g(y))^2}{\sum_x \weight (x)g(x)^2},
\end{equation}
where
\begin{equation*}
g(x)=
\begin{cases}
v_1(x), \mbox{ for }v_1(x)\succ 0\; (\Leftrightarrow x\in V^+),\\
0, \mbox{ otherwise } (\Leftrightarrow x\in V^-).
\end{cases}
\end{equation*}
Note that $V^-$ is non-empty, since $\langle v_1, 1\rangle=0$.
\end{lemma}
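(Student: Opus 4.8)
The plan is to use the eigenfunction identity $\mathcal L v_1=\lambda_1 v_1$ together with the Green formula, and to compare the bilinear form of $\mathcal L$ evaluated at the pair $(v_1,g)$ with the Dirichlet-type sum built from $g$ alone, estimating edge by edge.

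First I would evaluate $\langle\mathcal L v_1,g\rangle$ in two ways. Since $g(x)=v_1(x)$ for $x\in V^+$ and $g(x)=0$ for $x\in V^-$, one has $\langle v_1,g\rangle=\sum_{x\in V}v_1(x)g(x)\weight(x)=\sum_{x\in V^+}v_1(x)^2\weight(x)=\sum_{x\in V}\weight(x)g(x)^2$, hence $\langle\mathcal L v_1,g\rangle=\lambda_1\langle v_1,g\rangle=\lambda_1\sum_{x\in V}\weight(x)g(x)^2$. On the other hand, the Green formula gives $\langle\mathcal L v_1,g\rangle=\tfrac12\sum_{x,y\in V}\weight(x,y)\bigl(v_1(y)-v_1(x)\bigr)\bigl(g(y)-g(x)\bigr)$.

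The heart of the argument is the termwise inequality
\[
\bigl(v_1(y)-v_1(x)\bigr)\bigl(g(y)-g(x)\bigr)\succeq\bigl(g(y)-g(x)\bigr)^2\qquad\text{for all }x,y\in V,
\]
equivalently $\bigl(g(y)-g(x)\bigr)\bigl((v_1-g)(y)-(v_1-g)(x)\bigr)\succeq 0$. Here $g\succeq 0$ vanishes exactly on $V^-$, while $v_1-g$ vanishes on $V^+$ and equals $v_1\preceq 0$ on $V^-$. Checking the four cases according to which of $V^+,V^-$ the vertices $x,y$ belong to, each product is $\succeq 0$ by the order axioms alone: in the two ``pure'' cases (both vertices in $V^+$, or both in $V^-$) one of the two factors is $0$, and in the mixed case both factors are $\preceq 0$ (using $v_1\preceq 0$ on $V^-$), so their product is $\succeq 0$. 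Multiplying by $\weight(x,y)\succeq 0$ and summing over all $x,y$ yields $\langle\mathcal L v_1,g\rangle\succeq\tfrac12\sum_{x,y\in V}\weight(x,y)\bigl(g(x)-g(y)\bigr)^2$.

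Finally I would note that $\sum_{x\in V}\weight(x)g(x)^2=\sum_{x\in V^+}v_1(x)^2\weight(x)\succ 0$: indeed $V^+\ne\emptyset$, since otherwise $v_1\preceq 0$ on all of $V$, which together with $\langle v_1,1\rangle=0$ and $\weight(x)\succ 0$ forces $v_1\equiv 0$, contradicting that $v_1$ is an eigenfunction. Combining with the previous step, $\lambda_1\sum_{x\in V}\weight(x)g(x)^2\succeq\tfrac12\sum_{x,y\in V}\weight(x,y)\bigl(g(x)-g(y)\bigr)^2$, and dividing by the positive quantity $\sum_{x\in V}\weight(x)g(x)^2$ gives \eqref{lambda1g}. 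I expect the only delicate point to be the termwise inequality in the mixed case, where one must argue with products of non-positive field elements rather than appeal to any analytic fact.
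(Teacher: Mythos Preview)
Your proof is correct and follows essentially the same route as the paper: evaluate $\langle\mathcal L v_1,g\rangle$ once via the eigenvalue equation and once via Green's formula, then establish the termwise inequality $\bigl(v_1(y)-v_1(x)\bigr)\bigl(g(y)-g(x)\bigr)\succeq\bigl(g(y)-g(x)\bigr)^2$ by a case split on $V^+$ versus $V^-$. Your repackaging of the inequality as $\bigl(g(y)-g(x)\bigr)\bigl((v_1-g)(y)-(v_1-g)(x)\bigr)\succeq 0$ is a tidy way to organise the four cases; one small imprecision is that in the mixed case the two factors are both $\preceq 0$ only for one ordering of $(x,y)$ and both $\succeq 0$ for the other, but since swapping $x\leftrightarrow y$ negates each factor the product is unchanged and the conclusion stands. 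Your explicit justification that $V^+\ne\emptyset$ (hence the denominator is strictly positive) is a point the paper leaves implicit.
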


\begin{proof}
From the one hand we have
\begin{equation*}
\langle\L v_1,g\rangle=\sum_x \weight (x) \L v_1(x)g(x)=\lambda_1\sum_x \weight (x)  v_1(x)g(x)=\lambda_1\sum_xg^2(x)\weight (x).
\end{equation*}
From the other hand, due to the Green's formula
\begin{align*}
\langle\L v_1,g\rangle&=\dfrac{1}{2}\sum_{x,y} \weight (x,y) (v_1(y)-v_1(x))(g(y)-g(x))\\
&=\dfrac{1}{2}\sum_{x,y\in V^+} \weight (x,y) (g(y)-g(x))^2\\
&+\underbrace{\dfrac{1}{2}\sum_{x,y\in V^-} \weight (x,y)(v_1(y)-v_1(x)) \underbrace{(g(y)-g(x))}_{=0\mbox{ on }V^-}}_{=0=\frac{1}{2}\sum_{x,y\in V^-} \weight (x,y) (g(y)-g(x))^2}\\
&+\underbrace{\dfrac{1}{2}\sum_{\substack{x\in V^-,\\y\in V^+}} \weight (x,y)\underbrace{(v_1(y)-v_1(x))}_{\succeq g(y)-g(x)} \underbrace{(g(y)-g(x))}_{=g(y)\succeq 0}}_{\succeq (g(y)-g(x))^2}\\
&+\underbrace{\dfrac{1}{2}\sum_{\substack{x\in V^+,\\y\in V^-}} \weight (x,y)\underbrace{(v_1(y)-v_1(x))}_{\preceq g(y)-g(x)} \underbrace{(g(y)-g(x))}_{=g(y)\preceq 0}}_{\succeq (g(y)-g(x))^2}\\
&\succeq\dfrac{1}{2}\sum_{x,y} \weight (x,y) (g(y)-g(x))^2\\
\end{align*}
Therefore, \eqref{lambda1g} follows.
\end{proof}

\begin{proof}[Proof of Theorem \ref{Theorem::Cheeger}]
Within the proof $V^+, V^-$ and $g$ are as in the proof of Lemma \ref{Lemma:lambda1g}.

Without loss of generality, we can assume that $\weight (V^+)\preceq \weight (V^-)$ (otherwise, consider eigenfunction $-v_1$). Moreover, we can enumerate all $n=\#V$ vertices of the graph so, that 
\begin{equation*}
\underbrace{v_1(x_1)\preceq\dots \preceq v_1(x_{n_0})\preceq 0}_{V^-}\prec\underbrace{ v_1(x_{n_0+1})\preceq\dots\preceq v_1(x_{n})}_{V^+}.
\end{equation*}
We introduce some further notations. Let
\begin{equation*}
S_i = \{x_j\;:\; 1\le j \le i\},\; i= {1,\dots, n}
\end{equation*}
and
\begin{equation*}
\beta = \min_{i= {1,\dots, n}} \dfrac{\weight(\partial S_i)}{\min\{\weight(S_i), \weight(V\setminus S_i)\}}.
\end{equation*}
Then
\begin{equation*}
\weight(\partial S_i) = \sum_{\substack{j: 1\le j \le i\\k: i<k\le n}}  \weight (x_j,x_k)
\end{equation*}
\begin{equation*}
\weight(S_i) = \sum_{j: 1\le j \le  i}  \weight (x_j)\mbox{ and } \weight(V\setminus S_i) = \sum_{j: i< j \le n}  \weight (x_j)
\end{equation*}
and we note that $h\preceq\beta$.

Denoting the r.h.s of \eqref{lambda1g} by $W$ and multiplying numerator and denominator by $\sum_{x,y} \weight (x,y)(g(x)+g(y))^2$ we obtain
\begin{equation*}
W = \dfrac{\frac{1}{2}\sum_{x,y}\weight (x,y)(g(x)-g(y))^2 \sum_{x,y}\weight (x,y) (g(x)+g(y))^2}{\sum_x \weight (x)g(x)^2 \sum_{x,y} \weight (x,y)(g(x)+g(y))^2}
\end{equation*}
and by Cauchy-Schwarz inequality \footnote{The Cauchy-Schwarz inequality in an ordered field can be proven by induction or through Pythagorean theorem} 
\begin{equation}\label{W1}
W\succeq \dfrac{\frac{1}{2}\left(\sum_{x,y}\weight (x,y)\left|g^2(x)-g^2(y)\right|\right)^2}{\sum_x \weight (x)g(x)^2 \sum_{x,y} \weight (x,y)(g(x)+g(y))^2}
\end{equation}
Apart from this from \eqref{lambda1g} we have
\begin{align*}
W\sum_x \weight (x)g(x)^2&= \dfrac{1}{2}\sum_{x,y} \weight (x,y) (g(x)-g(y))^2 =[\mbox{ Green formula}]\\
&=\sum_{x}\L g(x)g(x)\weight (x)=\sum_{x} \weight (x)g^2(x)- \sum_{x,y} \weight (x,y)g(y)g(x)\\
&=[\pm\mbox{the first sum}]\\
&=2\sum_{x} \weight (x)g^2(x)-\sum_{x} \weight (x)g^2(x) - \sum_{x,y} \weight (x,y) g(y)g(x)\\
&=2\sum_{x} \weight (x)g^2(x)-\sum_{x,y} \weight (x,y)g^2(x) -\sum_{x,y} \weight (x,y) g(y)g(x)\\
&=2\sum_{x} \weight (x)g^2(x)-\left(\sum_{x,y} \weight (x,y)g(x)(g(x)+g(y))\right)\\
&=2\sum_{x} \weight (x)g^2(x)-\left(\sum_{x,y} \weight (x,y)g(y)(g(x)+g(y))\right)\\
&=2\sum_{x} \weight (x)g^2(x)-\dfrac{1}{2}\left(\sum_{x,y} \weight (x,y)(g(x)+g(y))^2\right),\\
\end{align*}
where in the last two lines we exchanged notations $x$ and $y$, sum the two lines up and divided by $2$ (usually this trick is used in a proofs of Green's formula).

Therefore, from \eqref{W1} follows
\begin{align*}
W&\succeq \dfrac{\frac{1}{2}\left(\sum_{x,y}\weight (x,y)|g^2(x)-g^2(y)|\right)^2}{\sum_x \weight (x)g(x)^2 (4\sum_{x} \weight (x)g^2(x)-2W\sum_x \weight (x)g(x)^2)}\\
&\succeq \dfrac{\frac{1}{2}\left(\sum_{x,y}\weight (x,y)|g^2(x)-g^2(y)|\right)^2}{\left(\sum_x \weight (x)g(x)^2\right)^2 (4-2W)}
\end{align*}
Now let us estimate the numerator $A:=\frac{1}{2}\left(\sum_{x,y}\weight (x,y)|g^2(x)-g^2(y)|\right)^2$. 
\begin{align*}
A&=\dfrac{1}{2} \left(2\sum_{i<j} \weight (x_i,x_j)(g^2(x_j)-g^2(x_i))\right)^2=2 \left(\sum_{i=1}^{n-1}\sum_{j=i+1}^n \weight (x_i,x_j)(g^2(x_j)-g^2(x_i))\right)^2\\
&=2 \left(\sum_{i=1}^{n-1}\sum_{j=i+1}^n \weight (x_i,x_j)(g^2(x_j)-g^2(x_{j-1})+g^2(x_{j-1})\dots-g^2(x_{i+1})+g^2(x_{i+1})-g^2(x_i))\right)^2\\
&=2 \left(\sum_{i=1}^{n-1}\sum_{j=i+1}^n \weight (x_i,x_j)\sum_{k=i}^{j-1}(g^2(x_{k+1})-g^2(x_k))\right)^2\\
&=2 \left(\sum_{i=1}^{n-1}\sum_{j=i+1}^n\sum_{k=i}^{j-1} \weight (x_i,x_j)(g^2(x_{k+1})-g^2(x_k))\right)^2\\
&=[\mbox {change limits of summation for $k$ and $j$}]\\
&=2 \left(\sum_{i=1}^{n-1}\sum_{k=i}^{n-1}\sum_{j=k+1}^{n} \weight (x_i,x_j)(g^2(x_{k+1})-g^2(x_k))\right)^2\\
&=2 \left(\sum_{i=1}^{n-1}\sum_{k=i}^{n-1}(\weight (x_i,x_{k+1})+\dots+\weight (x_i,x_n))(g^2(x_{k+1})-g^2(x_k))\right)^2\\
&=[\mbox {change limits of summation for $k$ and $i$}]\\
&=2 \left(\sum_{k=1}^{n-1}\sum_{i=1}^{k}(\weight (x_i,x_{k+1})+\dots+\weight (x_i,x_n))(g^2(x_{k+1})-g^2(x_k))\right)^2\\
&=2 \left(\sum_{k=1}^{n-1}\underbrace{\sum_{i=1}^{k}\sum_{j=k+1}^{n}\weight (x_i,x_j)}_{=\weight(\partial S_k)}(g^2(x_{k+1})-g^2(x_k))\right)^2\\
&=2 \left(\sum_{k=1}^{n-1}\weight(\partial S_k)(g^2(x_{k+1})-g^2(x_k))\right)^2=2 \left(\sum_{k=n_0}^{n-1}\weight(\partial S_k)(g^2(x_{k+1})-g^2(x_k))\right)^2.\\
\end{align*}
Note that for $k\ge n_0$ we have $\weight(V\setminus S_k)\preceq \weight( S_k)$ due to $ V^-\subset S_k$. Therefore 
$$
\dfrac{\weight(\partial S_k)}{\weight(V\setminus S_k)}\succeq \beta
$$
for such $k$ and we obtain
\begin{align*}
A&\succeq 2 \left(\sum_{k=n_0}^{n-1}\beta \weight(V\setminus S_k)(g^2(x_{k+1})-g^2(x_k))\right)^2\\
&= 2 \left(\beta\sum_{k=n_0}^{n-1}\sum_{j=k+1}^n \weight (x_j)(g^2(x_{k+1})-g^2(x_k))\right)^2\\
&= 2 \left(\beta\sum_{j=n_0+1}^{n}\sum_{k=n_0}^{j-1} \weight (x_j)(g^2(x_{k+1})-g^2(x_k))\right)^2\\
&= 2 \left(\beta\sum_{j=n_0+1}^{n}\weight (x_j)(g^2(x_{j})-g^2(x_{n_0}))\right)^2= 2 \left(\beta\sum_{j=n_0+1}^{n}\weight (x_j)g^2(x_{j})\right)^2,\\
&= 2 \left(\beta\sum_{j=1}^{n}\weight (x_j)g^2(x_{j})\right)^2,\\
\end{align*}
where the last two equalities are true, since $g(x_i)=0$ for $i\le n_0$ and $V^-\ne \emptyset$.

Therefore,
\begin{align*}
W&\succeq \dfrac{A}{\left(\sum_x \weight (x)g(x)^2\right)^2 (4-2W)}\succeq \dfrac{2\beta^2}{4-2W}\\
\end{align*}
From where follows ($W\preceq 2$ as Rayleigh quotient of $g$), that
$$
W^2-2W+\beta^2\preceq 0
$$
and 
$W\succeq 1-\sqrt{1-\beta^2}$ since discriminant is positive due to $\beta\preceq 1=\dfrac{\weight(\partial S_1)}{\weight(S_1)}$.

\end{proof}
Using Lemma \ref{lemma::non-complete} we immediately get the estimate of $\alpha_1$ in terms of Cheeger constant:
\begin{corollary}\label{cor::Cheegeralpha}
For any  non-complete graph the following holds:
$$\alpha_1=|\alpha_1|\preceq \sqrt{1-h^2}.$$
\end{corollary}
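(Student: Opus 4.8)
The plan is to obtain this as a formal consequence of Theorem~\ref{Theorem::Cheeger} and Lemma~\ref{lemma::non-complete}, using only the identity $\alpha_i=1-\lambda_i$ and elementary order manipulations valid in any ordered field. No genuinely new work is needed; the statement is a corollary in the strict sense.

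First I would note that $\sqrt{1-h^2}$ makes sense as an element of $\K^+\cup\{0\}$: one has $h\preceq 1$ (as observed just after the definition of the Cheeger constant, by testing $S$ equal to a single vertex of smallest weight), hence $1-h^2\succeq 0$, and since $\K$ is real-closed every nonnegative element has a square root. Next, Theorem~\ref{Theorem::Cheeger} gives $\lambda_1\succeq 1-\sqrt{1-h^2}$. Subtracting both sides from $1$ reverses $\succeq$ into $\preceq$ (if $\lambda_1-(1-\sqrt{1-h^2})\succeq 0$ then $(1-\lambda_1)$ is $\preceq \sqrt{1-h^2}$, by the definition of the order and postulate (2) applied to the difference), so
\[
\alpha_1=1-\lambda_1\preceq 1-\bigl(1-\sqrt{1-h^2}\bigr)=\sqrt{1-h^2}.
\]
Finally, because the graph is non-complete, Lemma~\ref{lemma::non-complete} yields $\alpha_1\succeq 0$, so by the definition of the absolute value in $\K$ we have $|\alpha_1|=\alpha_1$, and the asserted chain $\alpha_1=|\alpha_1|\preceq\sqrt{1-h^2}$ follows.

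I do not anticipate any obstacle. The only points deserving a line of care are that the right-hand side is a bona fide element of $\K$ (which is where real-closedness of $\K$ enters) and that passing from a lower bound on $\lambda_1$ to an upper bound on $\alpha_1=1-\lambda_1$ respects the order — and both of these are already settled in the preceding text, so the proof is essentially a one-line computation.
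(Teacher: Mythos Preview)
Your proof is correct and follows exactly the route the paper intends: the paper derives the corollary in one line by combining Theorem~\ref{Theorem::Cheeger} with Lemma~\ref{lemma::non-complete}, and you have merely spelled out the two order manipulations (passing from $\lambda_1\succeq 1-\sqrt{1-h^2}$ to $\alpha_1\preceq\sqrt{1-h^2}$, and using $\alpha_1\succeq 0$ to get $|\alpha_1|=\alpha_1$). Your extra remark that $\sqrt{1-h^2}\in\K$ by real-closedness is a nice point of hygiene but not a departure from the paper's argument.
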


\subsection{An estimation of $\alpha_{n-1}$}

\begin{lemma}\label{aaa} For any graph with at least two vertices  the following holds:
$\alpha_{n-1}\prec 0$ and $|\alpha_{n-1}|\succeq \frac{1}{n-1}=c_n\in \Bbb Q^+\subset \K$.
\end{lemma}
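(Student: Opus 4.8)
The plan is to obtain the statement as an immediate consequence of the trace identity \eqref{sum:of:ev} of Proposition~\ref{Prop1}. Since $\alpha_{n-1}=1-\lambda_{n-1}$, the two assertions $\alpha_{n-1}\prec 0$ and $|\alpha_{n-1}|\succeq\frac{1}{n-1}$ are jointly equivalent to the single inequality $\alpha_{n-1}\preceq-\frac{1}{n-1}$, that is, to $\lambda_{n-1}\succeq\frac{n}{n-1}$ --- which is precisely \eqref{lambdan-1:n:n-1}. So the core of the argument is already recorded in Proposition~\ref{Prop1}.

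Concretely, I would first note that $n-1\in\K$ is the image of a positive natural number, hence $n-1\succ 0$, so $\frac{1}{n-1}\succ 0$; moreover $\frac{1}{n-1}\in\Bbb Q^+\subset\K$ because an ordered field has characteristic $0$ and therefore contains the prime field $\Bbb Q$. Then \eqref{lambdan-1:n:n-1} gives
$$
\alpha_{n-1}=1-\lambda_{n-1}\preceq 1-\frac{n}{n-1}=-\frac{1}{n-1}\prec 0,
$$
which establishes both claims at once with $c_n=\frac{1}{n-1}$.

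If one prefers a proof not routed through Proposition~\ref{Prop1}, the same conclusion follows by a pigeonhole argument on the trace: the matrix of $\L$ in the standard basis has every diagonal entry equal to $1$ (there are no loops, so $p(x,x)=0$, while $\sum_y p(x,y)=1$), hence $\sum_{i=0}^{n-1}\lambda_i=n$; since $\lambda_0=0$ by Theorem~\ref{thm1} and each of the remaining $n-1$ eigenvalues is $\preceq\lambda_{n-1}$, we get $n\preceq(n-1)\lambda_{n-1}$, i.e. $\lambda_{n-1}\succeq\frac{n}{n-1}$, and then proceed as above. Either way there is essentially no obstacle here; the only point deserving a line of care is that $\frac{1}{n-1}$ is a legitimate positive element of $\K$ lying in its prime field, which is clear.
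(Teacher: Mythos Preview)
Your proof is correct and follows exactly the paper's approach: the paper derives the lemma in one line from \eqref{lambdan-1:n:n-1} via $\alpha_{n-1}=1-\lambda_{n-1}$, which is precisely your main route. Your additional remarks (on $\tfrac{1}{n-1}\in\Bbb Q^+\subset\K$ and the direct trace computation) are fine but not needed beyond what Proposition~\ref{Prop1} already provides.
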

\begin{proof}
It follows from \eqref{lambdan-1:n:n-1} since $\alpha_{n-1}=1-\lambda_{n-1}$.
\end{proof}
\section{Properties of $\P^m f$ and its subsequences}
Through this section let $\P$ be a transition operator on finite graph $(V,b)$ over non-Archimedean field $\K$ and $f\in \mathfrak F$. We investigate a behaviour of the sequence $\{\P^m f\}_{m\in \Bbb N}$ as $m\to \infty$. Firstly we present estimates on $\{\P^m f\}_{m\in \Bbb N}$ in terms of $f$. The results are similar to the case of classical weighted graphs. 
\subsection{Estimates for $\P^m f$}
\label{section::rw}
\begin{theorem}\label{Thm4}
For any non-complete graph with $\#V=n>2$ and for any $f\in \lspan(\{v_i \mid \alpha_i\succ 0, i=\overline{0,n-1}\})$ the following holds
 \begin{equation*}
\|\P^{m} f - \widetilde f\|\preceq \alpha_1^m \|f\|,
\end{equation*}
where
\begin{equation*}
\widetilde f(x)=\dfrac{1}{\weight (V)}\sum_{y\in V} f(y)\weight (y),\; x\in V.
\end{equation*}
\end{theorem}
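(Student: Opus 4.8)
The plan is to diagonalize $\P$ in its orthonormal eigenbasis $v_0,\dots,v_{n-1}$ (the same basis that diagonalizes $\L$), to recognize $\widetilde f$ as the $v_0$-component of $f$, and then to bound all the remaining components uniformly by $\alpha_1^{m}$. The whole argument is linear algebra in the eigenbasis, carried out carefully over $\K$.

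First I would observe that the eigenfunction $v_0$ of $\P$ for the eigenvalue $\alpha_0=1$ is constant and normalized, hence $v_0\equiv 1/\sqrt{\weight(V)}$, so that
\[
\langle f,v_0\rangle\, v_0(x)=\frac{1}{\weight(V)}\sum_{y\in V}f(y)\weight(y)=\widetilde f(x),\qquad x\in V,
\]
i.e.\ $\widetilde f=\langle f,v_0\rangle v_0$ is the orthogonal projection of $f$ onto $\lspan\{v_0\}$. Since by hypothesis $f\in\lspan(\{v_i\mid\alpha_i\succ 0\})$, expanding $f$ in the orthonormal basis gives $f=\sum_{i:\alpha_i\succ 0}f_iv_i$ with $f_i=\langle f,v_i\rangle$; in particular $f_0v_0=\widetilde f$ and, by the Parseval identity over $\K$, $\|f\|^2=\sum_{i:\alpha_i\succ 0}f_i^2$.

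Next I would apply $\P^{m}$ term by term, using $\P v_i=\alpha_i v_i$ and $\alpha_0=1$, to get $\P^{m}f-\widetilde f=\sum_{i\ge 1,\ \alpha_i\succ 0}f_i\alpha_i^{m}v_i$, and then use orthonormality again to obtain
\[
\|\P^{m}f-\widetilde f\|^2=\sum_{i\ge 1,\ \alpha_i\succ 0}f_i^2\alpha_i^{2m}.
\]
If there is no index $i\ge 1$ with $\alpha_i\succ 0$ then this sum is empty, $f=\widetilde f$, and the claim is trivial; otherwise $\alpha_1\succ 0$ and, by the ordering $1=\alpha_0\succ\alpha_1\succeq\alpha_2\succeq\dots$, every such $i$ satisfies $0\prec\alpha_i\preceq\alpha_1$, whence $\alpha_i^{2m}\preceq\alpha_1^{2m}$ (a product of $2m$ nonnegative factors, each bounded by the corresponding factor of $\alpha_1^{2m}$). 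Therefore
\[
\|\P^{m}f-\widetilde f\|^2\preceq\alpha_1^{2m}\sum_{i\ge 1,\ \alpha_i\succ 0}f_i^2\preceq\alpha_1^{2m}\sum_{i:\alpha_i\succ 0}f_i^2=\alpha_1^{2m}\|f\|^2,
\]
and taking square roots gives $\|\P^{m}f-\widetilde f\|\preceq\alpha_1^{m}\|f\|$.

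I do not expect a deep obstacle; the points needing care are purely \emph{ordered-field hygiene}: checking that the normalization of $v_0$ makes $\widetilde f$ literally equal to $\langle f,v_0\rangle v_0$; separating out the degenerate case $\alpha_1=0$ (equivalently, the span reduces to the constants), for which both sides vanish; and explicitly justifying that squaring and extracting square roots are order-preserving on $\K^+\cup\{0\}$ — valid since $\K$ is real-closed, and with $\alpha_1^{m}\|f\|\succeq 0$ — rather than silently importing this as an analytic fact about $\mathbb R$.
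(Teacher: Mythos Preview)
Your proof is correct and follows essentially the same route as the paper: expand $f$ in the orthonormal eigenbasis, identify the $v_0$-component with $\widetilde f$ via $v_0\equiv 1/\sqrt{\weight(V)}$, compute $\|\P^m f-\widetilde f\|^2$ by orthonormality, and bound each $\alpha_i^{2m}$ by $\alpha_1^{2m}$. Your explicit handling of the degenerate case $\alpha_1=0$ and the remarks on order-preserving square roots are, if anything, slightly more careful than the paper's version.
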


\begin{proof}
Let $f=\sum\limits_{i=0}^{k}f_i v_i$, where $\displaystyle k=\max_{\substack{i=\overline{0,n-1};\\\alpha_i\succ 0}} i$. Then 
\begin{equation*}
\P^{m} f-\widetilde f=\sum\limits_{i=0}^{k}f_i\alpha_i^m v_i-\widetilde f
=f_0v_0+\sum\limits_{i=1}^{k}f_i\alpha_i^{m} v_i-\widetilde f,
\end{equation*}
since $\alpha_0=1$. Further, 
using that $v_0$ is a constant normalized eigenfunction, i.e. $v_0\equiv \frac{1}{\sqrt{b(V)}}$, we get
\begin{align*}
f_0&=\langle f,v_0\rangle=\langle f, \dfrac{1}{\sqrt{\weight (V)}}\rangle=\dfrac{1}{\sqrt{\weight (V)}} \sum_{y\in V} f(y)\weight (y).\\
\end{align*}
Therefore,
\begin{align*}
f_0v_0&=\dfrac{1}{{\weight (V)}} \sum_{y\in V} f(y)\weight (y)=\widetilde f
\end{align*}
and 
\begin{align*}
\P^{m} f-\widetilde f&=\sum\limits_{i=1}^{k}f_i\alpha_i^{k} v_i.
\end{align*}
Then 
\begin{align*}
\|\P^{m} f-\widetilde f\|^2&=\langle \P^{m} f-\widetilde f, \P^{2m} f-\widetilde f\rangle=\sum\limits_{i=1}^{k}f_i^2\alpha_i^{2m}\\
&\preceq \max_{1\le i\le k}\alpha_i^{2m}\sum\limits_{i=1}^{k}f_i^2\preceq \max_{1\le i\le k} \alpha_i^{2m}\sum\limits_{i=0}^{k}f_i^2=\max_{1\le i\le k} \alpha_i^{2m}\|f\|^2.
\end{align*}
and we obtain
\begin{align*}
\|{\P^m} f-\widetilde f\|&\preceq \alpha_1^{m}\|f\|,
\end{align*}
since eigenvalues are ordered by their values. The condition of non-completeness is needed to ensure $\alpha_1\succ 0$.
\end{proof}
Applying now Corollary \ref{cor::Cheegeralpha} we obtain:
\begin{corollary}
Under the conditions of Theorem \ref{Thm4} the following holds
$$
\|\P^{m} f - \widetilde f\|\preceq \left(\sqrt{1-h^2}\right)^m \|f\|.
$$
\end{corollary}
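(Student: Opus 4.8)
The plan is to chain together the two results that immediately precede this corollary, so that essentially no new work is required. Theorem \ref{Thm4} already supplies the bound $\|\P^{m} f - \widetilde f\| \preceq \alpha_1^m \|f\|$ under exactly the hypotheses assumed here (non-complete graph, $\#V = n > 2$, and $f \in \lspan(\{v_i \mid \alpha_i \succ 0\})$), so the only task left is to replace $\alpha_1^m$ by $\left(\sqrt{1-h^2}\right)^m$.

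First I would record that $\sqrt{1-h^2}$ makes sense in $\K$: since $h \preceq 1$ (noted right after the definition of the Cheeger constant), we have $1 - h^2 \succeq 0$, and $\K$ being real-closed, every nonnegative element has a square root, so $\sqrt{1-h^2} \in \K^+ \cup \{0\}$. Next, because the graph is non-complete, Lemma \ref{lemma::non-complete} gives $\alpha_1 \succeq 0$ and Corollary \ref{cor::Cheegeralpha} gives $\alpha_1 = |\alpha_1| \preceq \sqrt{1-h^2}$; thus $0 \preceq \alpha_1 \preceq \sqrt{1-h^2}$.

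The one step to carry out carefully is the passage from $0 \preceq \alpha_1 \preceq \sqrt{1-h^2}$ to $\alpha_1^m \preceq \left(\sqrt{1-h^2}\right)^m$ for every $m \in \Bbb N$. This is the elementary fact that in an ordered field the map $t \mapsto t^m$ is monotone on $\K^+ \cup \{0\}$: by induction on $m$, multiply $\alpha_1 \preceq \sqrt{1-h^2}$ by $\alpha_1^{m-1} \succeq 0$ to get $\alpha_1^m \preceq \alpha_1^{m-1}\sqrt{1-h^2}$, multiply the inductive hypothesis $\alpha_1^{m-1} \preceq \left(\sqrt{1-h^2}\right)^{m-1}$ by $\sqrt{1-h^2} \succeq 0$, and combine, using the postulates on products and sums of positive elements. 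Finally, since $\|f\| \in \K^+ \cup \{0\}$ by definition of the norm, multiplying the chain $\|\P^{m} f - \widetilde f\| \preceq \alpha_1^m \|f\| \preceq \left(\sqrt{1-h^2}\right)^m \|f\|$ through by $\|f\|$ preserves the order, which yields the claim. I do not expect any genuine obstacle; the only point requiring attention is to invoke the non-completeness hypothesis (inherited from Theorem \ref{Thm4}), which is precisely what guarantees $\alpha_1 \succeq 0$ and lets Corollary \ref{cor::Cheegeralpha} apply.
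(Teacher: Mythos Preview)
Your argument is correct and matches the paper's one-line justification exactly: Theorem \ref{Thm4} gives $\|\P^{m} f - \widetilde f\|\preceq \alpha_1^m\|f\|$, and Corollary \ref{cor::Cheegeralpha} (which uses the non-completeness hypothesis) gives $0\preceq \alpha_1\preceq\sqrt{1-h^2}$, whence monotonicity of $t\mapsto t^m$ on nonnegatives finishes the job. Your final sentence is phrased slightly awkwardly---you mean to multiply the inequality $\alpha_1^m\preceq(\sqrt{1-h^2})^m$ by $\|f\|\succeq 0$, not to multiply the displayed chain (which already contains $\|f\|$) by $\|f\|$ again---but the intended step is clear and valid.
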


\begin{theorem}\label{Thm::bip}
For bipartite graph with the partition $V=V_1\cup V_2$ the following holds for any $f\in \mathcal F$:
 \begin{equation*}
\|\P^{2m} f - \overline f\|\preceq \alpha_1 ^{2m}\|f\|,
\end{equation*}
where
\begin{equation*}
\overline f(x)=\dfrac{2}{\weight (V)}
\begin{cases}
\sum_{y\in V_1} f(y)\weight (y),\; x\in V_1,\\
\sum_{y\in V_2} f(y)\weight (y),\; x\in V_2.
\end{cases}
\end{equation*}
\end{theorem}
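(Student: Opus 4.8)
The plan is to follow the proof of Theorem~\ref{Thm4} almost verbatim, the only new ingredient being that for a bipartite graph the eigenvalue $\alpha_{n-1}=-1$ plays, after squaring, the same ``frozen'' role as $\alpha_0=1$. First I would expand $f=\sum_{i=0}^{n-1}f_iv_i$ in the orthonormal eigenbasis of $\P$, so that $\P^{2m}f=\sum_{i=0}^{n-1}f_i\alpha_i^{2m}v_i$. By Theorem~\ref{thm1}(5) and the Corollary following it, for a connected bipartite graph $\alpha_{n-1}=-1$ is simple with eigenfunction $v_{n-1}$ equal to a constant $c$ on $V_1$ and to $-c$ on $V_2$; the normalisation $\|v_{n-1}\|=1$ forces $c=1/\sqrt{\weight(V)}$, just as $v_0\equiv 1/\sqrt{\weight(V)}$. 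Since $\alpha_0^{2m}=1$ and $\alpha_{n-1}^{2m}=(-1)^{2m}=1$, this yields
$$
\P^{2m}f-\bigl(f_0v_0+f_{n-1}v_{n-1}\bigr)=\sum_{i=1}^{n-2}f_i\alpha_i^{2m}v_i .
$$

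The second step is to identify the frozen part $f_0v_0+f_{n-1}v_{n-1}$ with $\overline f$. Exactly as in the proof of Theorem~\ref{Thm4}, $f_0=\langle f,v_0\rangle=\frac{1}{\sqrt{\weight(V)}}\sum_{y\in V}f(y)\weight(y)$, and similarly $f_{n-1}=\langle f,v_{n-1}\rangle=\frac{1}{\sqrt{\weight(V)}}\bigl(\sum_{y\in V_1}f(y)\weight(y)-\sum_{y\in V_2}f(y)\weight(y)\bigr)$. Evaluating $f_0v_0+f_{n-1}v_{n-1}$ at $x\in V_1$, the two contributions reinforce on the $V_1$-sum and cancel on the $V_2$-sum, giving $\frac{2}{\weight(V)}\sum_{y\in V_1}f(y)\weight(y)$; at $x\in V_2$ the roles are reversed and one gets $\frac{2}{\weight(V)}\sum_{y\in V_2}f(y)\weight(y)$. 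Hence $f_0v_0+f_{n-1}v_{n-1}=\overline f$ and $\P^{2m}f-\overline f=\sum_{i=1}^{n-2}f_i\alpha_i^{2m}v_i$.

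Finally, orthonormality of the $v_i$ gives $\|\P^{2m}f-\overline f\|^2=\sum_{i=1}^{n-2}f_i^2\alpha_i^{4m}$. The one genuinely ``bipartite'' point is that $|\alpha_i|\preceq\alpha_1$ for every $i$ with $1\le i\le n-2$: if $\alpha_i\succeq 0$ this is just the ordering $\alpha_i\preceq\alpha_1$, while if $\alpha_i\prec 0$ then by the symmetry in Theorem~\ref{thm1}(5) the number $-\alpha_i$ is again an eigenvalue of $\P$, namely $-\alpha_i=\alpha_{n-1-i}$ with $1\le n-1-i\le n-2$, so $-\alpha_i\preceq\alpha_1$. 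Therefore $\alpha_i^{4m}=(\alpha_i^2)^{2m}\preceq(\alpha_1^2)^{2m}=\alpha_1^{4m}$, whence
$$
\|\P^{2m}f-\overline f\|^2\preceq\alpha_1^{4m}\sum_{i=1}^{n-2}f_i^2\preceq\alpha_1^{4m}\sum_{i=0}^{n-1}f_i^2=\alpha_1^{4m}\|f\|^2 ,
$$
and taking the positive square root yields $\|\P^{2m}f-\overline f\|\preceq\alpha_1^{2m}\|f\|$. I do not expect a real obstacle here: everything is linear algebra over $\K$ together with the spectral symmetry already established. The only points needing care are the arithmetic identity $f_0v_0+f_{n-1}v_{n-1}=\overline f$ and keeping the index ranges straight when invoking bipartite symmetry; the degenerate case $\#V=2$ (a single edge) is immediate, since then there is no index $i$ with $1\le i\le n-2$ and $\P^{2m}f=\overline f$ identically.
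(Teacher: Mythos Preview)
Your proposal is correct and follows essentially the same route as the paper's proof: expand in the eigenbasis, peel off the $i=0$ and $i=n-1$ terms using $\alpha_0^{2m}=\alpha_{n-1}^{2m}=1$, identify $f_0v_0+f_{n-1}v_{n-1}$ with $\overline f$ via the explicit formulas for $v_0$ and $v_{n-1}$, and then bound the remaining sum using that $|\alpha_i|\preceq\alpha_1$ for $1\le i\le n-2$. Your justification of the last step via the spectral symmetry $\alpha_{n-1-i}=-\alpha_i$ is a bit more explicit than the paper's one-line remark ``since $\alpha_1=-\alpha_{n-2}$ and the values of other eigenvalues are between them,'' and your handling of the degenerate case $\#V=2$ is a nice touch the paper omits.
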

The proof follows from explanations to Theorem 2.7 in \cite{Grigoryan} and follows the same outline as the proof of Theorem \ref{Thm4}.
\begin{proof}
Let $f=\sum_{i=0}^{n-1}f_i v_i$. Then 
\begin{align*}
\P^{2m} f-\overline f&=\sum\limits_{i=0}^{n-1}f_i\alpha_i^{2m} v_i-\overline f\\
&=f_0v_0+f_{n-1}v_{n-1}+\sum\limits_{i=1}^{n-2}f_i\alpha_i^{2m} v_i-\overline f,
\end{align*}
since $\alpha_0=1$ and for bipartite graphs $\alpha_{n-1}=-1$.
Let us notice that 
\begin{align*}
f_0&=\langle f,v_0\rangle=\langle f, \dfrac{1}{\sqrt{\weight (V)}}\rangle=\dfrac{1}{\sqrt{\weight (V)}} \sum_{y\in V} f(y)\weight (y)\\
&=\dfrac{1}{\sqrt{\weight (V)}} \sum_{y\in V_1} f(y)\weight (y)+\dfrac{1}{\sqrt{\weight (V)}} \sum_{y\in V_2} f(y)\weight (y)
\end{align*}
and
\begin{align*}
f_{n-1}&=\langle f,v_{n-1}\rangle=\dfrac{1}{\sqrt{\weight (V)}} \sum_{y\in V_1} f(y)\weight (y)-\dfrac{1}{\sqrt{\weight (V)}} \sum_{y\in V_2} f(y)\weight (y)
\end{align*}
Therefore,
\begin{align*}
f_0v_0&=\dfrac{1}{{\weight (V)}} \sum_{y\in V_1} f(y)\weight (y)+\dfrac{1}{{\weight (V)}} \sum_{y\in V_2} f(y)\weight (y)
\end{align*}
and
\begin{equation*}
f_{n-1}v_{n-1}=
\begin{cases*}
\dfrac{1}{{\weight (V)}} \sum_{y\in V_1} f(y)\weight (y)-\dfrac{1}{{\weight (V)}} \sum_{y\in V_2} f(y)\weight (y),\; x \in V_1\\
-\dfrac{1}{{\weight (V)}} \sum_{y\in V_1} f(y)\weight (y)+\dfrac{1}{{\weight (V)}} \sum_{y\in V_2} f(y)\weight (y),\; x \in V_2
\end{cases*}
\end{equation*}
from where follows $f_0v_0+f_{n-1}v_{n-1}-\overline f=0$ and 
\begin{align*}
\P^{2m} f-\overline f&=\sum\limits_{i=1}^{n-2}f_i\alpha_i^{2m} v_i.
\end{align*}
Then 
\begin{align*}
\|\P^{2m} f-\overline f\|^2&=\langle \P^{2m} f-\overline f, \P^{2m} f-\overline f\rangle=\sum\limits_{i=1}^{n-2}f_i^2\alpha_i^{4m}\\
&\preceq \max_{1\le i\le n-2} \alpha_i^{4m}\sum\limits_{i=1}^{n-2}f_i^2\preceq \max_{1\le i\le n-2} \alpha_i^{4m}\sum\limits_{i=0}^{n-1}f_i^2=\max_{1\le i\le n-2} \alpha_i^{4m}\|f\|^2.
\end{align*}
and we obtain
\begin{align*}
\|\P^{2m} f-\overline f\|^2&\preceq \alpha_1^{2m}\|f\|^2,
\end{align*}
since $\alpha_1=-\alpha_{n-2}$ and the values of other eigenvalues are between them.
\end{proof}

\begin{corollary}\label{Cor3}
Let $\#V>2$. Under the conditions of Theorem \ref{Thm::bip}  the following holds
$$
\|\P^{2m} f - \overline f\|\preceq \left({1-h^2}\right)^{m} \|f\|
$$
\end{corollary}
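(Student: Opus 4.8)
\textbf{Proof proposal for Corollary~\ref{Cor3}.}

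The plan is to simply chain together Theorem~\ref{Thm::bip} and Corollary~\ref{cor::Cheegeralpha}; the only point requiring a moment's thought is that the Cheeger estimate for $\alpha_1$ was proved for \emph{non-complete} graphs, while Theorem~\ref{Thm::bip} is phrased for bipartite graphs. First I would dispose of this: a bipartite graph on $n=\#V>2$ vertices is automatically non-complete. Indeed, if $V=V_1\cup V_2$ is a bipartition, any two distinct vertices in the same part are non-adjacent, and since $n>2$ at least one of $V_1,V_2$ contains two distinct vertices. Hence Lemma~\ref{lemma::non-complete} and Corollary~\ref{cor::Cheegeralpha} apply and give $0\preceq\alpha_1=|\alpha_1|\preceq\sqrt{1-h^2}$; here the square root is legitimate because $0\preceq h\preceq 1$ (the numerator $\weight(\partial S)$ is $\succeq 0$, the denominator $\succ 0$, and $h\preceq 1$ is noted in the text after Definition~\ref{Def:Cheeger}), so $1-h^2\succeq 1-h\succeq 0$.

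Next I would transport this inequality through the $2m$-th power. Squaring is monotone on the non-negative cone of $\K$ (from $0\preceq a\preceq b$ one gets $a^2=a\cdot a\preceq a\cdot b\preceq b\cdot b=b^2$ using the order axioms of $\K$), so $\alpha_1^2\preceq 1-h^2$. Raising to the $m$-th power is likewise monotone on non-negative elements (an immediate induction: $a^{k+1}=a\cdot a^k\preceq b\cdot b^k=b^{k+1}$, valid over any ordered field), whence
\[
\alpha_1^{2m}=(\alpha_1^2)^m\preceq(1-h^2)^m .
\]
Finally, Theorem~\ref{Thm::bip} yields $\|\P^{2m}f-\overline f\|\preceq\alpha_1^{2m}\|f\|$, and multiplying the displayed inequality by $\|f\|\succeq 0$ gives
\[
\|\P^{2m}f-\overline f\|\preceq\alpha_1^{2m}\|f\|\preceq(1-h^2)^m\|f\|,
\]
which is the claim.

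There is essentially no genuine obstacle here: all the analytic content sits in Theorem~\ref{Thm::bip} and in the stronger Cheeger inequality (Theorem~\ref{Theorem::Cheeger}, via Corollary~\ref{cor::Cheegeralpha}). The one step that must be handled carefully — and the sole reason for the hypothesis $\#V>2$ — is the implication ``bipartite $\Rightarrow$ non-complete'', since without it neither Lemma~\ref{lemma::non-complete} nor Corollary~\ref{cor::Cheegeralpha} could be invoked to bound $\alpha_1$; everything else is the monotonicity of $t\mapsto t^k$ on $\K^+\cup\{0\}$, which holds in any ordered field.
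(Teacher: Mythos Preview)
Your proposal is correct and follows the same approach as the paper: the paper's own proof is the one-line observation that $\#V>2$ forces a bipartite graph to be non-complete, after which Corollary~\ref{cor::Cheegeralpha} may be combined with Theorem~\ref{Thm::bip}. You have simply made explicit the monotonicity of powers on $\K^+\cup\{0\}$ that the paper leaves implicit.
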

\begin{proof}
$\#V>2$ provides non-completeness of bipartite graph and we can use Corollary \ref{cor::Cheegeralpha}.
\end{proof}

\subsection{Property of non-convergence for non-bipartite graphs}
In classical case the estimates from Theorem \ref{Thm4} and Theorem \ref{Thm::bip} are mainly used to prove the convergence of random walk to equilibrium and to estimate mixing times (see \cite{Grigoryan}, \cite{LPW}). Here we investigate the convergence over non-Archimedean field, since any Archimedean field is isomorphic to a subfield of real numbers and therefore is not interesting for us.

The convergence of sequence $\{k_n\}_{n\in \Bbb N}$ to $k$ in ordered field means by definition, that for any element $a\in \K$ there exists $N_0\in \Bbb N$ such that for any $n\ge N_0$ we have
$$
|k_n-k|\preceq a.
$$
To state any result on convergence for an arbitrary ordered field is impossible, since in surreal numbers (which is the largest possible ordered field, containing all the others, see e.g. \cite{Conway}, \cite[Theorem 24.29]{BB}) only constant sequences converge, if one consider the above mentioned classical notion of convergence (\cite[Theorem 16]{RSS}). Therefore, we will present the theorem on convergence for the important case of the Levi-Civita field in the next section.

However, for non-bipartite graphs we have the following surprisingly interesting property of $\P^m$ over arbitrary non-Archimedean ordered field $\Bbb K$.
\begin{theorem}\label{Prop3}
Let $V$ be a non-bipartite graph over a non-Archimedean field $\K$. Then there is no limit $\mathcal \alpha_{n-1}^m v_{n-1}(x)$ for $m\to \infty$ for any vertex $x\in V$ with $v_{n-1}(x)\ne 0$ over any non-Archimedean ordered field.
There are no partial limits.
\end{theorem}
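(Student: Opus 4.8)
Since $v_{n-1}$ is an eigenfunction of $\P$ with eigenvalue $\alpha_{n-1}$, we have $\P^m v_{n-1}(x)=\alpha_{n-1}^m\,v_{n-1}(x)$, so the assertion is really one about the scalar sequence $a_m:=\alpha^m w$ in $\K$, where I abbreviate $\alpha:=\alpha_{n-1}$ and $w:=v_{n-1}(x)\ne 0$. The plan is to extract from the earlier results exactly two facts about $\alpha$: by Lemma~\ref{aaa} we have $\alpha\prec 0$ and $|\alpha|\succeq c_n=\tfrac{1}{n-1}\in\Bbb Q^+$; and, since the graph is non-bipartite, Theorem~\ref{thm1} gives $\lambda_{n-1}\prec 2$, hence $\alpha=1-\lambda_{n-1}\succ -1$. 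Together these yield $0\prec|\alpha|\prec 1$. The next step is to note the decisive tension between two features of $|a_m|=|\alpha|^m|w|$: it is strictly decreasing in $m$ (because $0\prec|\alpha|\prec 1$), yet it is bounded below by a \emph{fixed} positive element of $\K$. Indeed, as $\K$ is non-Archimedean, choose an infinitesimal $\epsilon\succ 0$; since $\epsilon\preceq 1/k$ for every $k\in\Bbb N$, in particular $\epsilon\preceq 1/(n-1)^m=c_n^m\preceq|\alpha|^m$, so $|a_m|\succeq\epsilon|w|=:\delta\succ 0$ for all $m$. This uniform lower bound --- which is exactly the place where $|\alpha|\succeq c_n$ (hence, ultimately, $\lambda_{n-1}\succeq n/(n-1)$) enters --- is what will obstruct convergence.

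\textbf{Main argument.} Suppose, for contradiction, that $L\in\K$ is a partial limit of $\{a_m\}$; say $a_{m_j}\to L$ along a strictly increasing sequence of indices $m_1<m_2<\cdots$. Passing to absolute values, $|a_{m_j}|\to|L|$, and since every $|a_{m_j}|\succeq\delta$ we get $|L|\succeq\delta\succ 0$, so in particular $L\ne 0$. On the other hand, because $m_{j+1}\ge m_j+1$ and $\{|a_m|\}_m$ is strictly decreasing,
\[
|a_{m_{j+1}}|\ \preceq\ |a_{m_j+1}|\ =\ |\alpha|\,|a_{m_j}|,
\]
and passing to the limit $j\to\infty$ --- using that limits in an ordered field are unique, are compatible with $\preceq$, and commute with multiplication by a fixed constant --- gives $|L|\preceq|\alpha|\,|L|$. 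Since $0\prec|\alpha|\prec 1$ and $|L|\succ 0$, this forces the contradiction $|L|\preceq|\alpha|\,|L|\prec|L|$. Hence $\{a_m\}$, and therefore $\{\alpha_{n-1}^m v_{n-1}(x)\}_m$, has no partial limit, and a fortiori no limit.

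\textbf{Remarks and the main obstacle.} Non-bipartiteness is used only through $|\alpha|\prec 1$, and it is genuinely needed: for a bipartite graph $\alpha_{n-1}=-1$, so $a_m=(-1)^m w$, which does have the two partial limits $w$ and $-w$. The only analytic input is the elementary behaviour of limits in an ordered field. I expect the main obstacle to be conceptual rather than computational: one must recognise that \emph{neither} estimate on $|\alpha|$ suffices by itself --- $|\alpha|\succeq c_n$ prevents $|a_m|$ from ever becoming infinitesimally small, while $|\alpha|\prec 1$ forces any prospective nonzero limit $|L|$ to satisfy $|L|\preceq|\alpha|\,|L|\prec|L|$ --- so that it is precisely the combination, which fails in the Archimedean case, that rules out every partial limit.
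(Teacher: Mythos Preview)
Your proof is correct, and it takes a genuinely different route from the paper's. The paper argues directly that no subsequence is Cauchy: it bounds the gap
\[
|a_m-a_{m+l}|=|\alpha|^m\,|1-\alpha^l|\,|w|
\]
from below by a fixed positive element $\epsilon\,|1+\alpha|\,|w|$, using $|\alpha|^m\succeq (n-1)^{-m}\succ\epsilon$ and a short odd/even case split to show $|1-\alpha^l|\succeq|1+\alpha|$ for every $l\ge 1$. You instead bound $|a_m|$ itself below by $\delta=\epsilon|w|$, then argue by contradiction: any subsequential limit $L$ would have $|L|\succeq\delta\succ 0$, yet passing to the limit in $|a_{m_{j+1}}|\preceq|\alpha|\,|a_{m_j}|$ forces $|L|\preceq|\alpha|\,|L|\prec|L|$. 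Both arguments rest on the same two inputs, $|\alpha|\succeq \tfrac{1}{n-1}$ from Lemma~\ref{aaa} and $|\alpha|\prec 1$ from non-bipartiteness, and both exploit the non-Archimedean hypothesis in the same place (an infinitesimal dominated by every $(n-1)^{-m}$). Your version is a bit cleaner---it avoids the case analysis on the parity of $l$ and isolates the mechanism more transparently---while the paper's version yields a concrete uniform lower bound on the successive gaps, which is marginally more informative if one cares about ``how far from Cauchy'' the sequence is.
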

In other words, there always exists a function $f\in \mathfrak F$ and a vertex $x\in V$ such that $\P^{m}f(x)$ has no (partial) limit in $\Bbb K$.
\begin{proof}
For any two elements of the sequence $\alpha_{n-1}^{m} v_{n-1}(x)$ and $\alpha_{n-1}^{m+l} v_{n-1}(x)$, where $l\in \Bbb N$
we have
\begin{multline*}
|\alpha_{n-1}^{m} v_{n-1}(x)-\alpha_{n-1}^{m+l} v_{n-1}(x)|=|\alpha_{n-1}^{m}||1-\alpha_{n-1}^l|| v_{n-1}(x)|\\
\succ {\tau}|1+\alpha_{n-1}|{|v_{n-1}(x)|},
\end{multline*}
where $\tau\in\K$ is any infinitesimal, since $|\alpha_{n-1}^{m}|\succeq 1/(n-1)^m\succ \tau$ due to Lemma \ref{aaa} and for $|1-\alpha_{n-1}^l|$ we have two possibilities:
\begin{itemize}
\item
if $l$  is odd, then $|1-\alpha_{n-1}^l|\succ 1\succ |1+\alpha_{n-1}|$, since $-1\prec \alpha_{n-1}\prec 0$ for non-bipartite graphs.
\item
If $l$  is even, then $\alpha_{n-1}^l=|\alpha_{n-1}|^l\prec |\alpha_{n-1}|\prec 1$, since graph is non-bipartite, and
$|1-\alpha_{n-1}^l|\succ |1-|\alpha_{n-1}||=|1+\alpha_{n-1}|$.
\end{itemize}
Since $-1\prec \alpha_{n-1}$ for non-bipartite graph we have $|1+\alpha_{n-1}|\ne 0$ and
non-convergence follows from the fact that any subsequence is not a Cauchy sequence.

\end{proof}

\section{Mixing over the Levi-Civita field}
\label{section::LC}
\subsection{Preliminaries on the Levi-Civita field}
The Levi-Civita field was  introduced by Tullio Levi-Civita in 1862 \cite{LeviCivita}.
The Levi-Civita field is the smallest non-Archimedean real-closed ordered field, which is Cauchy complete in the order topology. Moreover, it has a subfield, isomorphic to the field of rational functions, whose elements naturally appear as edge weights in the theory of electrical networks (see e.g. \cite{Muranova1, Muranova2}).

Firstly, we recall the definition and main notations for the Levi-Civita field $\mathcal R$ (see e.g. \cite{Berz}, \cite{Hall}, \cite{Shamseddine}, \cite{Shamseddinethesis}, \cite{ShamseddineBerz}).

\begin{definition}
A subset $Q$ of the rational numbers $\Bbb Q$ is called \emph{left-finite} if for every number $r\in \Bbb Q$ there are only finitely many elements of $Q$ that are smaller than $r$. 
\end{definition}

\begin{definition}\cite{LeviCivita}\label{LCfield}
We define the Levi-Civita field $\mathcal R$ as a field of formal power series on $\epsilon$, where any non-zero element  $a\in \mathcal R$ can be written as
\begin{equation}
a=\sum_{i=0}^\infty a_i\epsilon^{q_i},
\end{equation}
with $a_i\in \Bbb R$, $a_0\ne 0$, $Q=\{q_i\}_{i=0}^\infty\subset \mathbb Q$ being left-finite and $q_i$ are pairwise different.
\end{definition}
Addition and multiplication are defined naturally as for formal power series. The order is defined as follows: $a\succ 0$ if $a_0>0$.

For 
\begin{equation}
a=\sum_{i=0}^\infty a_i\epsilon^{q_i}\mbox{ and }d=\sum_{i=0}^\infty d_i\epsilon^{p_i}\
\end{equation}
we write $a\approx d$ (``$a$ is {\em comparable} to $d$'') if $q_0=p_0$ and $a_0=d_0$.
For the element $a=0$ we put $q_0=\infty$. Therefore, two non-zero elements are comparable, if the first terms of their series coincide.  Zero is comparable only to itself.
%

For the convenience, we state the following lemma, which follows immediately from the notion of comparabless.

\begin{lemma}\label{smalllemma}
The following two statements hold:
\begin{enumerate}
 \item
Let  $a\approx d$ for $a,d\in \R\setminus\{0\}$, then $|a|\succ\epsilon |d|$. Moreover, from $a\approx d$ follows $|a|\approx |d|$ and $a^m\approx d^m$ for any $m\in \Bbb N$.
\item
Let $a\approx r$ for $a\in \R, r\in \mathbb R\setminus\{0\}$. Then $|a|\succ \epsilon$ and there exists $ q>0$ such that $|a-r|\prec \epsilon^q$.
\item
If $a_1\approx d_1$ and $a_2\approx d_2$, where $a_1,a_2,d_1,d_2\in \R$, then $a_1a_2\approx d_1d_2$.
\end{enumerate}
\end{lemma}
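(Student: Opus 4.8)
The plan is to translate every assertion into a statement about the leading term of a formal power series and then read it off from the definitions of the order $\succ$ and of comparability $\approx$. Write a nonzero $a\in\R$ as $a=\sum_{i=0}^{\infty}a_i\epsilon^{q_i}$; since $Q=\{q_i\}$ is left-finite it possesses a minimum, so we may assume $q_0<q_1<\cdots$ and call $q_0$ the \emph{leading exponent} and $a_0\neq 0$ the \emph{leading coefficient} of $a$. The two facts I would isolate first are: (a) $a\succ 0$ exactly when $a_0>0$, hence for $a,b\in\R$ one has $a\succ b$ iff the leading coefficient of $a-b$ is positive; and (b) $|a|$ has the same leading exponent as $a$ and leading coefficient $|a_0|$, while $a\approx d$ (for $a,d\ne 0$) means precisely that $a$ and $d$ share both leading exponent and leading coefficient.

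For part (1), let $q_0$ be the common leading exponent and $c=a_0=d_0$ the common leading coefficient of $a$ and $d$. The difference $|a|-\epsilon|d|$ has leading term $|c|\epsilon^{q_0}$, since the term $|c|\epsilon^{q_0+1}$ produced by $\epsilon|d|$ has strictly larger exponent and nothing cancels $|c|\epsilon^{q_0}$; its leading coefficient $|c|$ is positive, so $|a|\succ\epsilon|d|$ by (a). That $|a|\approx|d|$ is immediate from (b), and $a^m\approx d^m$ follows either directly, because the leading term of $a^m$ is $c^m\epsilon^{mq_0}$ with $c^m\ne 0$, or by induction from part (3).

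For part (2), the real number $r$ is embedded as $r\epsilon^{0}$, so $a\approx r$ forces $a=r+\sum_{i\ge 1}a_i\epsilon^{q_i}$ with every $q_i>0$; then $|a|$ has leading term $|r|\epsilon^{0}$, so $|a|-\epsilon$ has positive leading coefficient $|r|$, giving $|a|\succ\epsilon$. If $a=r$ then $|a-r|=0\prec\epsilon^q$ for any $q>0$; otherwise $a-r$ has some leading exponent $q_1>0$, and choosing a rational $q$ with $0<q<q_1$ makes $\epsilon^q-|a-r|$ have leading coefficient $1>0$, i.e. $|a-r|\prec\epsilon^q$. For part (3), if one of the four elements is $0$ then its comparable partner is $0$ as well and both products vanish, so $0\approx 0$; otherwise, writing $e_j$ and $c_j$ for the common leading exponent and coefficient of $a_j$ and $d_j$, both $a_1a_2$ and $d_1d_2$ have leading term $c_1c_2\epsilon^{e_1+e_2}$ with $c_1c_2\ne 0$, hence are comparable.

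The computations are entirely routine; the only points that warrant a word of care — the closest thing to an obstacle — are that a nonzero element of $\R$ really does have a well-defined leading exponent (this is exactly where left-finiteness is used), that the leading term of a product or power of formal power series equals the product or power of the leading terms (which uses that $\mathbb{R}$ has no zero divisors so the leading coefficient does not vanish), and the bookkeeping of the degenerate cases $a=0$ in parts (1) and (3) and $a=r$ in part (2). Everything else is a direct unwinding of the definitions.
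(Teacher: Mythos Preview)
Your proof is correct and is exactly what the paper has in mind: the paper does not give a detailed argument but simply remarks that the lemma ``follows immediately from the notion of comparab[ility],'' and your proposal supplies precisely those routine leading-term computations. One tiny quibble: there is no degenerate case $a=0$ to worry about in part~(1), since the hypothesis there is $a,d\in\R\setminus\{0\}$; the only degenerate cases are $a=r$ in~(2) and a zero factor in~(3), which you handle correctly.
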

\subsection{Properties of $\P^m f$ in the Levi-Civita field} Let us consider finite graphs over the Levi-Civita field. Two following Propositions describe completely the behaviour of eigenfunctions of $\P$ under the action of $\P^m, m\in \Bbb N$ depending on eigenvalues.
\begin{proposition}\label{PropLC}
If $\alpha_i$ is an eigenvalue of transition operator such that $\alpha_i\approx r$ for some $r\in \Bbb R\setminus\{0\}$ and $\alpha_i\ne 1$ then there is no limit $\mathcal \alpha_{i}^m v_{i}(x)$ for $m\to~\infty, m\in\Bbb N$ whenever $v_{i}(x)\ne 0$.
Moreover, if in addition $\alpha_i\ne -1$, then there are no partial limits.
\end{proposition}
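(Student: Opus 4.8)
The plan is to imitate the non-bipartite proof of Theorem~\ref{Prop3}, replacing the estimate $|\alpha_{n-1}^m|\succeq 1/(n-1)^m$ (which used Lemma~\ref{aaa}) by the comparability hypothesis $\alpha_i\approx r$ together with Lemma~\ref{smalllemma}. The point is that when $\alpha_i\approx r$ for a nonzero real $r$, all powers $\alpha_i^m$ stay comparable to $r^m$, hence are bounded below in absolute value by some fixed infinitesimal; this is exactly what makes a Cauchy argument fail.

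First I would fix a vertex $x$ with $v_i(x)\neq 0$ and consider two terms of the sequence, $\alpha_i^m v_i(x)$ and $\alpha_i^{m+l}v_i(x)$ with $l\in\Bbb N$. As in the proof of Theorem~\ref{Prop3} we factor
\begin{equation*}
|\alpha_i^m v_i(x)-\alpha_i^{m+l}v_i(x)|=|\alpha_i^m|\,|1-\alpha_i^l|\,|v_i(x)|.
\end{equation*}
By Lemma~\ref{smalllemma}(1), $\alpha_i^m\approx r^m$, so $|\alpha_i^m|\succ\epsilon|r^m|=\epsilon|r|^m$, and in particular $|\alpha_i^m|$ exceeds a fixed nonzero infinitesimal independent of $m$ once we also control $|1-\alpha_i^l|$ from below. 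So the task reduces to showing $|1-\alpha_i^l|$ does not become arbitrarily small as $l\to\infty$.

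For that, I distinguish cases according to $r$. Since $-1\preceq\alpha_i\preceq 1$ and $\alpha_i\ne 1$, we have $r=a_0\in[-1,1)$. If $r\ne -1$: then $|1-\alpha_i^l|\approx|1-r^l|$, and since $|r|<1$ (the case $r=-1$ being excluded and $r=1$ already excluded, while $|r|=1$ forces $r=\pm1$), we have $r^l\to 0$ in $\Bbb R$, so $1-r^l\to 1$, hence eventually $|1-r^l|$ is a real number bounded away from $0$, say $\succeq 1/2$; thus $|1-\alpha_i^l|\succeq 1/2-\epsilon^q\succ 1/4$ for all large $l$ by Lemma~\ref{smalllemma}(2), and the displayed difference is $\succ\epsilon|r|^m\cdot\tfrac14\cdot|v_i(x)|$, an infinitesimal independent of $l$; this contradicts the Cauchy property, and since no partial limit can exist either when every tail stays that far spread out, there are no partial limits. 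If $r=-1$ (so $\alpha_i\approx -1$, $\alpha_i\ne -1$): for odd $l$, $\alpha_i^l\approx -1$ so $|1-\alpha_i^l|\approx 2$, giving a uniform lower bound $\succ 1$; this already shows the sequence is not Cauchy, proving non-convergence (the "moreover" clause excludes this case, so partial limits are not claimed here). Collecting the bounds, in all cases $|\alpha_i^m v_i(x)-\alpha_i^{m+l}v_i(x)|$ is bounded below by a fixed nonzero infinitesimal for suitable $l$, so the sequence is not Cauchy, hence has no limit; and when $\alpha_i\ne\pm1$ the stronger bound is uniform over all $l$, ruling out partial limits.

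The main obstacle is the bookkeeping in the case $r=-1$ versus $|r|<1$: one must be careful that "no partial limit" genuinely needs $\alpha_i\ne-1$, because when $\alpha_i\approx -1$ the even-index subsequence $\alpha_i^{2m}v_i(x)$ could a priori converge (it is comparable to $v_i(x)$), so only the full-sequence non-convergence survives. Making precise that comparability is preserved under powers and products is routine given Lemma~\ref{smalllemma}, so the real care is in choosing the case split on $r$ and in translating "bounded away from zero in $\Bbb R$" into "exceeds a fixed infinitesimal in $\R$" via Lemma~\ref{smalllemma}(2).
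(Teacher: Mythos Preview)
Your argument has a genuine gap: you claim that $\alpha_i\ne 1$ forces $r\in[-1,1)$, but this is false. The hypothesis $\alpha_i\approx r$ only says the leading term of $\alpha_i$ is $r$; an eigenvalue like $\alpha_i=1-\epsilon$ satisfies $\alpha_i\ne 1$ and $\alpha_i\prec 1$ yet has $r=1$. This case genuinely occurs (see the paper's second example, where $\alpha_1=D/(D+R\epsilon)\approx 1$), and in it your bound $|1-\alpha_i^l|\approx|1-r^l|$ collapses because $1-r^l=0$. The paper handles precisely this situation by writing $\alpha_i=r+\alpha_{i,1}\epsilon^{q_1}+\cdots$ with $\alpha_{i,1}\ne 0$ (since $\alpha_i\ne\pm1$) and computing $1-\alpha_i^l\approx -l\alpha_{i,1}r^{l-1}\epsilon^{q_1}$, which yields the fixed lower bound $|1-\alpha_i^l|\succ\epsilon^{q_1+1}$ and hence a uniform gap $\succ\epsilon^{3+q_1}|v_i(x)|$ between any two terms.

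There is a second, related error: you misread the ``moreover'' clause as excluding the case $r=-1$, but it only excludes $\alpha_i=-1$. When $\alpha_i\approx -1$ with $\alpha_i\ne -1$ you are still obliged to rule out partial limits, and your odd-$l$ estimate does not do this: the even-index subsequence $\alpha_i^{2m}v_i(x)$ has $\alpha_i^2\approx 1$ with $\alpha_i^2\ne 1$, which is exactly the missing $r=1$ analysis again. So both gaps are repaired by the same power-series computation that the paper supplies for the case $r^l=1$. A minor additional point: your lower bound $\epsilon|r|^m\cdot\tfrac14|v_i(x)|$ is $m$-dependent; to finish you should note (as the paper does via Lemma~\ref{smalllemma}(2)) that $|\alpha_i^m|\approx|r|^m$ is comparable to a nonzero real and hence $\succ\epsilon$, giving a genuinely fixed lower bound.
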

\begin{proof}
Firstly the case $\alpha_i=~-1$ is obvious, since partial limits for $\alpha_i=~-1$ exist for the subsequences $\mathcal \alpha_{i}^{2m} v_{i}(x)$ and $\mathcal \alpha_{i}^{2m+1} v_{i}(x)$ and limit does not exists whenever $v_i(x)\ne 0$. Therefore, let further in the proof $a_i\ne \pm 1$.

Let us point out that $|r|\le 1$ follows from  $|\alpha_{i}|\preceq 1$ .

For any two elements of sequence $\alpha_{i}^{m} v_{i}(x)$ and $\alpha_{i}^{m+l} v_{i}(x)$, where $l\in \Bbb N$
we have
\begin{equation*}
|\alpha_{i}^{m} v_{i}(x)-\alpha_{i}^{m+l} v_{i}(x)|=|\alpha_{i}^{m}||1-\alpha_{i}^l|| v_{i}(x)|.
\end{equation*}
For the first multiplayer $|\alpha_i^{m}|\approx |r^m|$ holds and $|\alpha_i^{m}|\succ \epsilon$ due to Lemma \ref{smalllemma}. Further we have several cases:
\begin{enumerate}
\item
for $r^l\ne 1$, we have  $|1-\alpha_{i}^l|\approx |1- r^l|\in \Bbb R\setminus\{0\}$ and we obtain 
\begin{equation}\label{epskw}
|\alpha_{i}^{m} v_{i}(x)-\alpha_{i}^{m+l} v_{i}(x)|\succ{\epsilon}^2| v_{i}(x)|.
\end{equation}
due to Lemma \ref{smalllemma} (2).
\item
for $r^l=1$ (i. e. $r=1$  or $r=-1$ with $l$ being even) we have 
$$
|1-~\alpha_{i}^l|\approx~|\alpha_{i,1}lr^{l-1}\epsilon^{q_1}| \mbox{ for }
\alpha_{i}=r+\sum_{j=1}^\infty\alpha_{i,j}\epsilon ^{q_j}, \mbox{ with } \alpha_{i,1}\ne 0
$$
and
$|1-~\alpha_{i}^l|\succ\epsilon^{q_1+1} |\alpha_{i,1}lr^{l-1}|$ due to Lemma \ref{smalllemma} (1),. Therefore,
$$
|\alpha_{i}^{m} v_{i}(x)-\alpha_{i}^{m+l} v_{i}(x)|\succ{\epsilon}^{2+q_1}|\alpha_{i,1}lr^{l-1}|| v_{i}(x)|
$$ 
and again since $\alpha_{i,1}\ne 0$ we obtain
\begin{equation}\label{eps3}
|\alpha_{i}^{m} v_{i}(x)-\alpha_{i}^{m+l} v_{i}(x)|\succ{\epsilon}^{3+q_1}| v_{i}(x)|.
\end{equation}
\end{enumerate}
From \eqref{epskw} and \eqref{eps3} we obtain that for any $m, l\in\Bbb N$ holds
$$
|\alpha_{i}^{m} v_{i}(x)-\alpha_{i}^{m+l} v_{i}(x)|\succ{\epsilon}^{3+q_1}| v_{i}(x)|,
$$ 
i.e. the sequence $\alpha_{i}^{m} v_{i}(x)$ has no Cauchy subsequences.
\end{proof}

\begin{proposition}\label{PropLC1}
If $\alpha_i$ is an eigenvalue of transition operator such that $\alpha_i\approx ~r\epsilon^q$, for some $r\in \Bbb R, q>0,  q\in\Bbb Q\cup\{\infty\}$ then there is a limit $\mathcal \alpha_{i}^m v_{i}(x)$ for $m\to ~\infty, m\in \Bbb N$ for any vertex $x\in V$.
\end{proposition}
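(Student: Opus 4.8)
The plan is to prove that $\alpha_i^m v_i(x)\to 0$ as $m\to\infty$, working directly from the definition of convergence in $\R$ together with the comparability calculus of Lemma~\ref{smalllemma}. The guiding idea is elementary: an infinitesimal whose leading exponent is $q>0$, raised to the power $m$, has leading exponent $mq$, and $mq\to\infty$; so its powers eventually lie below any prescribed element of $\R$. Note that although $\R$ itself is non-Archimedean, the exponents of its series range over $\mathbb{Q}$, which \emph{is} Archimedean, and this is exactly the feature that makes the argument work.

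First I would dispose of the degenerate cases. If $v_i(x)=0$, or if $q=\infty$ (equivalently $\alpha_i=0$, since zero is comparable only to itself), then $\alpha_i^m v_i(x)=0$ for all large $m$ and there is nothing to prove. So assume $v_i(x)\neq 0$ and $\alpha_i\neq 0$; then $r\neq 0$ and $q\in\mathbb{Q}$ with $q>0$, and, since $v_i(x)\in\R$, I may write its leading term as $v_i(x)\approx s\epsilon^{t}$ with $s\in\mathbb{R}\setminus\{0\}$ and $t\in\mathbb{Q}$.

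Next I would track leading exponents through the field operations, which is the only place where care is needed and where the paper's insistence on using solely ordered-field and comparability facts must be honoured. By Lemma~\ref{smalllemma}(1), $\alpha_i\approx r\epsilon^{q}$ gives $\alpha_i^m\approx r^m\epsilon^{mq}$; then Lemma~\ref{smalllemma}(3) gives
\[
\alpha_i^m v_i(x)\approx r^m s\,\epsilon^{mq+t},
\]
and one further application of Lemma~\ref{smalllemma}(1) yields $|\alpha_i^m v_i(x)|\approx |r^m s|\,\epsilon^{mq+t}$, with $|r^m s|=|r|^m|s|>0$. In particular the leading exponent of $|\alpha_i^m v_i(x)|\in\R$ equals $mq+t$.

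Finally I would verify the convergence condition. Fix $a\in\R$ with $a\succ 0$, and let $p\in\mathbb{Q}$ be its leading exponent, the corresponding coefficient being strictly positive. Since $q>0$, by the Archimedean property of $\mathbb{Q}$ there is $N_0\in\mathbb{N}$ with $N_0 q+t>p$; then for every $m\ge N_0$ we have $mq+t>p$, so in the series $a-|\alpha_i^m v_i(x)|$ the whole of $|\alpha_i^m v_i(x)|$ contributes only exponents strictly larger than $p$, and hence $a-|\alpha_i^m v_i(x)|$ has the same (positive) leading term as $a$; thus $|\alpha_i^m v_i(x)|\prec a$. As $a\succ 0$ was arbitrary, this is precisely $\alpha_i^m v_i(x)\to 0$. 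There is no genuine obstacle: the only delicate point is to make sure every manipulation above rests solely on the ordered-field structure of $\R$ and on the stated properties of $\approx$ — multiplicativity, stability under powers and absolute value, the fact that $\approx$ pins down the leading exponent, and the fact that in $\R$ a positive element of larger leading exponent is $\prec$ one of smaller leading exponent.
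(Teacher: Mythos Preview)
Your proof is correct and follows essentially the same route as the paper's: both dispose of the degenerate case $\alpha_i=0$, then track the leading exponent of $\alpha_i^m v_i(x)$ as $mq+t$ (the paper writes $qm+t_0$) and observe that, since $q>0$ and exponents live in the Archimedean set $\mathbb{Q}$, this eventually exceeds the leading exponent of any prescribed $a\succ 0$, forcing $|\alpha_i^m v_i(x)|\prec a$. Your write-up is in fact a bit more careful than the paper's in explicitly invoking the individual parts of Lemma~\ref{smalllemma} and in isolating the trivial case $v_i(x)=0$.
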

\begin{proof}
Note that we also include here the case $\alpha_i=0$ ($\Leftrightarrow \alpha_i\approx 0$), since in this case $\alpha_{i}^m v_{i}(x)= 0\to 0$ for $m\in \Bbb N$.

Therefore, further in the proof we assume that $r\ne 0$. We have 
$$|\alpha_{i}^{m} v_{i}(x)|\approx |r^m\epsilon^{qm} v_i(x)|.
$$
 Let $\displaystyle v_i(x)=\sum_{j=0}^\infty v_{i,j} \epsilon^{t_j}$. Then
$$
|\alpha_{i}^{m} v_{i}(x)|=|r^m||v_{i,0}(x)|\epsilon^{qm+t_0} +o(\epsilon^{qm+t_0}),
$$
where  $o(\epsilon^p)$ stays for terms with powers of $\epsilon$, higher then $p$. Therefore, for any $a\succ 0, a=\sum_{i=0}^\infty a_i\epsilon^{q_i}\in \R$
we obtain ${|\alpha_{i}^{m} v_{i}(x)|\prec a}$  taking $m$ large enough (such that $qm+t_0>q_0$).
\end{proof}

Now we present convergence results in terms of Cheeger constant $h$ defined by \eqref{Def:Cheeger}.

\begin{theorem}\label{corhLC}
Let $V$ be a  bipartite graph with $\#V>2$ over the Levi-Civita field.
Let $h\approx 1$. Then for any $f\in \mathfrak F$ we have $\P^{2m} f\to \overline f$, where 
$$\overline f=\dfrac{2}{b(V)}
\begin{cases}
\sum_{y\in V_1} f(y)\weight (y),\; x\in V_1,\\
\sum_{y\in V_2} f(y)\weight (y),\; x\in V_2.
\end{cases}
$$
\end{theorem}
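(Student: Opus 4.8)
The plan is to reduce the statement to a convergence estimate that is already available and then read off the conclusion from the arithmetic of the Levi-Civita field. Since $\#V>2$, a bipartite graph is never complete, so Lemma~\ref{lemma::non-complete} and Corollary~\ref{cor::Cheegeralpha} apply; combined with Theorem~\ref{Thm::bip} this is precisely Corollary~\ref{Cor3}, which gives
$$\|\P^{2m}f-\overline f\|\preceq (1-h^2)^m\|f\|$$
for every $m\in\Bbb N$. Hence it suffices to prove that $(1-h^2)^m\|f\|\to 0$ in $\R$, and for this one only needs to control the leading term of $1-h^2$.

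First I would dispose of the trivial cases: if $f\equiv 0$ then $\overline f=0$ and $\|\P^{2m}f-\overline f\|=0$; if $h=1$ then $1-h^2=0$, so the right-hand side vanishes for all $m\ge 1$. In the remaining case $f\not\equiv 0$ and $h\prec 1$, so $\|f\|$ is a nonzero positive element of $\R$ with leading term $u_0\epsilon^{t_0}$ ($u_0>0$). For $1-h^2$ we always have $0\preceq h\preceq 1$, and the hypothesis $h\approx 1$ means $h=1+\eta$ with $\eta\in\R$ infinitesimal and nonzero; then $1-h^2=-2\eta-\eta^2\succeq 0$, and since $\eta^2$ has strictly larger leading exponent than $\eta$, the leading term of $1-h^2$ coincides with that of $-2\eta$. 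Consequently $1-h^2\approx c\epsilon^q$ for some real $c>0$ and some positive rational $q$.

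Next, by Lemma~\ref{smalllemma}(1),(3) we get $(1-h^2)^m\approx c^m\epsilon^{qm}$, and multiplying by $\|f\|$ gives $(1-h^2)^m\|f\|\approx c^m u_0\,\epsilon^{qm+t_0}$. This is exactly the situation treated in the proof of Proposition~\ref{PropLC1}: given any $a\succ 0$, $a=\sum_{i=0}^{\infty}a_i\epsilon^{q_i}\in\R$, choose $m$ so large that $qm+t_0>q_0$; then $(1-h^2)^m\|f\|\prec a$. By the displayed estimate, $\|\P^{2m}f-\overline f\|\prec a$ for all sufficiently large $m$, i.e. $\|\P^{2m}f-\overline f\|\to 0$. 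Finally, since $\|g\|^2=\sum_{y\in V}g(y)^2\weight(y)\succeq g(x)^2\weight(x)$ and $\weight(x)\succ 0$ for each vertex $x$, norm convergence forces $\P^{2m}f(x)\to\overline f(x)$ pointwise, which is the asserted convergence.

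The only genuinely delicate point is the comparability bookkeeping in the middle step: one must check that $h\approx 1$ indeed propagates to $1-h^2\approx c\epsilon^q$ with $q>0$, and that one may pass to $m$-th powers and multiply by $\|f\|$ without losing track of the leading exponent. Once that is settled, the convergence is immediate from Corollary~\ref{Cor3} together with the mechanism already used in the proof of Proposition~\ref{PropLC1}.
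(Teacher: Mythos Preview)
Your proof is correct and follows essentially the same route as the paper: apply Corollary~\ref{Cor3} to get $\|\P^{2m}f-\overline f\|\preceq (1-h^2)^m\|f\|$, use $h\approx 1$ to conclude that $1-h^2$ has strictly positive leading exponent, and then argue that the right-hand side goes to zero by tracking leading exponents as in Proposition~\ref{PropLC1}. You are somewhat more careful than the paper about the edge cases ($f\equiv 0$, $h=1$) and the comparability bookkeeping, and you add the passage from norm convergence to pointwise convergence, but the substance is identical.
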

\begin{proof}

Since $\#V>2$, the bipartite graph is non-complete and $\alpha_1\succeq 0$. Since $h\preceq 1$, from $h^2\approx 1$ we get $1-h^2=|1-h^2|\prec \epsilon^q$ for some $q>0$ due to Lemma \ref{smalllemma} (2) and the convergence follows from Corollary \ref{Cor3}, since 
$$
\|\P^{2m}-\overline f\|\prec \epsilon^{qm} \|f\|\prec  \epsilon^{qm+t_0}\varphi_0 \mbox{ for } \|f\|=\sum_{i=0}^{\infty}\varphi_i \epsilon^{t_i}.
$$
Therefore, for any $a\succ 0, a=\sum_{i=0}^\infty a_i\epsilon^{q_i}\in \R$
we obtain ${\|\P^{2m}-\overline f\|\prec a}$  taking $m$ large enough (such that $qm+t_0>q_0$).
\end{proof}

\begin{theorem}\label{corhLC1}
Let $V$ be a non-complete non-bipartite graph with $\#V=n>1$ over the Levi-Civita field. Let $h\approx 1$. Then for any $f\in \lspan(\{v_i \mid \alpha_i\succ 0, i=\overline{0,n-1}\})$ we have $\P^{m} f\to \widetilde f$, where 
\begin{equation*}
\widetilde f(x)=\dfrac{1}{\weight (V)}\sum_{y\in V} f(y)\weight (y),\; x\in V.
\end{equation*}
\end{theorem}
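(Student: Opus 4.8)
The plan is to follow the proof of Theorem \ref{corhLC} almost verbatim, replacing the use of Corollary \ref{Cor3} (valid for bipartite graphs) by Theorem \ref{Thm4} together with the corollary stated immediately after it. First I would check that the hypotheses of Theorem \ref{Thm4} are automatically satisfied here: a non-bipartite graph contains an odd cycle and so has at least three vertices, and if it had exactly three it would be a triangle, hence complete; since $V$ is non-bipartite and non-complete this forces $n=\#V>2$. Consequently, for every $f\in\lspan(\{v_i\mid\alpha_i\succ 0\})$, Theorem \ref{Thm4} and the corollary following it give
\begin{equation*}
\|\P^{m}f-\widetilde f\|\preceq\left(\sqrt{1-h^{2}}\right)^{m}\|f\|,\qquad m\in\Bbb N .
\end{equation*}

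Next I would exploit $h\approx 1$ to make the decay factor infinitesimal. Since $0\preceq h\preceq 1$ we have $h^{2}\preceq 1$, so $1-h^{2}=|1-h^{2}|\succeq 0$, while $h\approx 1$ and Lemma \ref{smalllemma}(3) give $h^{2}=h\cdot h\approx 1$. If $h=1$ the claim is immediate, since the displayed bound then forces $\P^{m}f=\widetilde f$ for all $m\ge 1$. Otherwise $h\prec 1$, hence $h^{2}\prec 1$ and $1-h^{2}\succ 0$, and Lemma \ref{smalllemma}(2) supplies some $q>0$ with $1-h^{2}=|h^{2}-1|\prec\epsilon^{q}$. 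Since taking square roots is order-preserving on non-negative elements of $\R$, this yields $\sqrt{1-h^{2}}\prec\epsilon^{q/2}$ and hence $\left(\sqrt{1-h^{2}}\right)^{m}\prec\epsilon^{qm/2}$ for every $m$.

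Combining the two estimates gives $\|\P^{m}f-\widetilde f\|\prec\epsilon^{qm/2}\|f\|$ (the case $f\equiv 0$ being trivial). Writing $\|f\|=\sum_{j\ge 0}\varphi_{j}\epsilon^{t_{j}}$, the right-hand side has leading exponent $qm/2+t_{0}$, which tends to $\infty$ as $m\to\infty$; so, exactly as in the final step of the proof of Theorem \ref{corhLC}, for every $a\succ 0$ one has $\|\P^{m}f-\widetilde f\|\prec a$ once $m$ is large enough, which by definition means $\P^{m}f\to\widetilde f$.

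I do not expect a genuine obstacle: the result is essentially the non-bipartite counterpart of Theorem \ref{corhLC}, obtained by the same two-step scheme (decay estimate, then comparability argument in $\R$). The only points requiring a little care are noting that non-bipartiteness plus non-completeness forces $n>2$, so that Theorem \ref{Thm4} applies and gives decay controlled by $\sqrt{1-h^{2}}$ rather than by $1-h^{2}$, and converting $h\approx 1$ into $1-h^{2}\prec\epsilon^{q}$; after that, the linear growth of the exponent $qm/2$ in $m$ does the rest.
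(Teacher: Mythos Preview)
The proposal is correct and follows essentially the same route as the paper's own proof, which simply states that the result follows from Theorem~\ref{Thm4} via Corollary~\ref{cor::Cheegeralpha}, similarly to the proof of Theorem~\ref{corhLC}. You have merely filled in details the paper omits---notably the verification that non-bipartite plus non-complete forces $n>2$, the separate treatment of $h=1$, and the explicit square-root step $\sqrt{1-h^{2}}\prec\epsilon^{q/2}$---but the underlying two-step scheme (decay bound, then comparability argument) is identical.
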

\begin{proof}
Follows from Theorem \ref{Thm4} by Corollary \ref{cor::Cheegeralpha}, similarly to the proof of Theorem \ref{corhLC}.
\end{proof}

\subsection{Examples}

\begin{example}
Let us consider a graph over the Levi-Civita field with $4$ vertices, presented at Figure \ref{Fig1} with $R,D>0$ (notations $R$ and $D$ come from electrical networks and make sense there, see e.g. \cite{Muranova1}, \cite{Muranova2}).

\begin{figure}[H]\label{Fig1}
\centering
\begin{tikzpicture}[auto,node distance=2.5cm,
                    thick,main node/.style={circle, draw, fill=black!100,
                        inner sep=0pt, minimum width=3pt}]

  \node[main node] (1) [label={[above]$1$}]{};
  \node[main node] (2) [right of=1,label={[above]$2$}] {};
  \node[main node] (3) [right of=2,label={[above]$3$}] {};
  \node[main node] (4) [right of=3,label={[above]$4$}] {};

  \path[every node/.style={font=\sffamily\small}]
    (2) edge node [bend left] {$R^{-1}$} (1)
    (3) edge node [bend right] {$R^{-1}$} (2)
    (4) edge node [bend right] {$D^{-1}\epsilon$} (3);

\end{tikzpicture}
\caption{}
\end{figure}
The matrix of the operator $\mathcal P$ is the following:
\begin{equation*}
\left(
\begin{array}{cccc}
0&1&0&0\\
\frac{1}{2}&0&\frac{1}{2}&0\\
0&\frac{D}{D+R\epsilon}&0&\frac{R\epsilon}{D+R\epsilon}\\
0&0&1&0\\
\end{array}\right)
\end{equation*}
The corresponding eigenvalues are $\alpha_{0,3}=\pm 1$ and $\alpha_{1,2}=\pm \sqrt{\frac{R\epsilon}{2D+2R\epsilon}}$ (note, that the graph is bipartite). The latest eigenvalues can be written as generalized binomial series (which gives as a homomorphism from rational functions on $\epsilon$ to $\R$) as
\begin{align*}
\alpha_{1,2}&=\pm \sum_{i=0}^\infty {{-\frac{1}{2}} \choose_i}\frac{1}{\sqrt{2}}\left(\frac{R}{D}\right)^{i+\frac{1}{2}}\epsilon^{i+\frac{1}{2}}\\
&=\pm \frac{1}{\sqrt{2}}\left(\frac{R}{D}\right)^\frac{1}{2}\epsilon^{\frac{1}{2}}\mp\frac{1}{2\sqrt{2}}\left(\frac{R}{D}\right)^{\frac{3}{2}}\epsilon^{\frac{3}{2}}+o(\epsilon^{\frac{3}{2}}),
\end{align*}
where ${{-\frac{1}{2}} \choose_k}$ is a generalized binomial coefficient.

Therefore, $|\alpha_{1,2}|\prec \left(\frac{R\epsilon}{2 D}\right)^\frac{1}{2}$ and
 \begin{equation*}
\|\P^{2m} f - \overline f\|\prec \left(\frac{R\epsilon}{2 D}\right)^{m}\|f\|,
\end{equation*}
from where follows, that $\P^{2m} f \to \overline f$ as $m$ goes to $\infty$.

We can also easily calculate Cheeger constant for a given graph (considering all possible partitions) and get 
$$
h=\frac{D}{D+2R\epsilon}
$$ 
that holds on the partition $\{1,2\}\cup\{3,4\}$.

Further, 
$$
h=\sum_{i=0}^\infty (-1)^i\left(\frac{2R}{D}\right)^{i}\epsilon^{i}\approx 1
$$
and we can use Theorem \ref{corhLC} to state the convergence of $\P^{2m} f $.
\end{example}

\begin{example}
Let us investigate the eigenvalues of the graph  over the Levi-Civita field, presented at Figure \ref{Fig2} with $R,D>0$.
\begin{figure}[H]\label{Fig2}
\centering
\begin{tikzpicture}[auto,node distance=2.5cm,
                    thick,main node/.style={circle, draw, fill=black!100,
                        inner sep=0pt, minimum width=3pt}]

  \node[main node] (1) [label={[above]$1$}]{};
  \node[main node] (2) [right of=1,label={[above]$2$}] {};
  \node[main node] (3) [right of=2,label={[above]$3$}] {};
  \node[main node] (4) [right of=3,label={[above]$4$}] {};

  \path[every node/.style={font=\sffamily\small}]
    (2) edge node [bend left] {$R^{-1}$} (1)
    (3) edge node [bend right] {$D^{-1}\epsilon$} (2)
    (4) edge node [bend right] {$R^{-1}$} (3);

\end{tikzpicture}
\caption{}
\end{figure}
The corresponding eigenvalues are $\alpha_{0,3}=\pm 1$ and $\alpha_{1,2}=\pm {\frac{D}{D+R\epsilon}}$. The latest eigenvalues can be written in form generalized  binomial series as 
$$
\alpha_{1,2}=\pm \sum_{i=0}^\infty {{-1} \choose_i}\left(\dfrac{R}{D}\right)^i \epsilon^{i}=\pm 1\mp\frac{R\epsilon}{D}+o(\epsilon).
$$

Therefore, $|\alpha_{1,2}|\approx 1$.


The Cheeger constant for a given graph is 
$$
h=\frac{R\epsilon}{2D+R\epsilon}=\frac{R\epsilon}{2D}-\left(\frac{R\epsilon}{2D}\right)^2+o(\epsilon^2).
$$
This example has shown us, that a small (smaller than any natural number) Cheeger constant is possible over $\R$, as well as eigenvalues of $\P$ comparable to $1$, and the convergence of $\P^m f$ fails for general $f$.
\end{example}

\end{document}